\algnewcommand\algorithmicinput{\textbf{Input:}}
\algnewcommand\Input{\item[\algorithmicinput]}
\algnewcommand\algorithmicoutput{\textbf{Output:}}
\algnewcommand\Output{\item[\algorithmicoutput]}
\algnewcommand\algorithmicbreak{\textbf{break}}
\algnewcommand\Break{\item[\algorithmicbreak]}
\newtheorem{theorem}{Theorem}
\newtheorem{remark}{Remark}
\newtheorem{lemma}{Lemma}
\renewcommand{\bm}{\mathbf}
\newcommand{\atom}{\boldsymbol \varphi}
\renewcommand{\dim}{p}
\newcommand{\sig}{\bm s}
\newcommand{\err}{\boldsymbol \xi}
\newcommand{\res}{\bm r}
\newcommand{\indset}{\Omega}
\newcommand{\coef}{x}
\newcommand{\coh}{\mu}
\newcommand{\norm}[1]{\left\lVert#1\right\rVert}
\newcommand{\abs}[1]{\left| #1\right| }
\newcommand{\innerp}[2]{\left\langle #1, #2 \right\rangle}
\newcommand{\E}{\textup{E}}
\newcommand{\dictm}{\bm\Phi}
\newcommand{\coefv}{\bm x}
\newcommand{\supp}{\operatorname{supp}}
\newcommand{\atnum}{N}
\newcommand{\slev}{m}
\newcommand{\atome}{\bm y}
\newcommand{\atomeh}{\atome^{\perp}}
\newcommand{\rr}{\bm w}
\newcommand{\sigeff}{\sigma_{\operatorname{eff}}(\slev,\coh)}
\newcommand{\Y}{\bm Y}
\newcommand{\V}{\bm V}
\newcommand{\vv}{\bm v}
\newcommand{\Aout}{A_{\text{out}}}
\newcommand{\Ain}{A_{\text{in}}}
\newcommand{\Bout}{B_{\text{out}}}
\newcommand{\Bin}{B_{\text{in}}}
\newcommand{\tmu}{\tilde{\coh}}
\newcommand{\xmin}{\coef_{\min}}
\newcommand{\R}{\mathbb{R}}
\newcommand\blfootnote[1]{%
	\begingroup
	\renewcommand\thefootnote{}\footnote{#1}%
	\addtocounter{footnote}{-1}%
	\endgroup
}
\begin{document}

\title{Tight Recovery Guarantees for Orthogonal Matching Pursuit Under Gaussian Noise}

\author{{
Chen Amiraz, Robert Krauthgamer and Boaz Nadler}\\
Weizmann Institute of Science\\
}

\maketitle

\begin{abstract}
{Orthogonal Matching pursuit (OMP) is a popular algorithm to estimate an unknown sparse vector from multiple linear measurements of it. Assuming exact sparsity and that the measurements are corrupted by additive Gaussian noise, the success of OMP is often formulated as exactly recovering the support of the sparse vector. 
Several authors derived a sufficient condition for exact support recovery by OMP with high probability depending on the signal-to-noise ratio, defined as the magnitude of the smallest non-zero coefficient of the vector divided by the noise level. 
We make two contributions. First, we derive a slightly sharper sufficient condition for two variants of OMP, in which either the sparsity level or the noise level is known. Next, we show that this sharper sufficient condition is tight, in the following sense: for a wide range of problem parameters, there exist a dictionary of linear measurements and a sparse vector with a signal-to-noise ratio slightly below that of the sufficient condition, for which with high probability OMP fails to recover its support. Finally, we present simulations which illustrate that our condition is tight for a much broader range of dictionaries.
}

{\textbf{\textit{Keywords:}} Compressed sensing,
	inverse problems,
	mutual incoherence,
	orthogonal matching pursuit (OMP),
	signal reconstruction,
	sparse estimation,
	support recovery.}
\blfootnote{This article has been accepted for publication in Information and Inference: A Journal of the IMA, published by Oxford University Press.}
\end{abstract}

\section{Introduction}
A fundamental inverse problem arising in a wide variety of fields is to estimate an unknown sparse vector $\coefv\in\mathbb{R}^{\atnum}$ from $\dim$ linear measurements of it, often with $\dim<\atnum$.
Notable examples in signal processing include sparse recovery in a redundant representation and compressed sensing \citep{elad2010sparse,foucart2013mathematical}. 
A notable example in statistics is linear regression with a sparse coefficient vector, in particular when there are more variables than observations \citep{tibshirani2015statistical}.

Assuming that the measurements are corrupted by additive Gaussian noise, the observed signal $\sig\in\R^{\dim}$ has the following form
\begin{equation}\label{eq:sig_dec}
\sig = \dictm \coefv+\sigma\err
\end{equation}
where $\dictm \in \R^{\dim\times\atnum}$ is a known overcomplete matrix, $\coefv\in\R^{\atnum}$ is an unknown sparse vector, $\err\in\R ^\dim$ is a random Gaussian noise vector $\err\sim N\left(\bm 0,\bm I_{\dim}\right)$ and $\sigma>0$ is the noise level.
We say that $\coefv$ is \emph{$\slev$-sparse} if $\norm{\coefv}_0=\abs{\supp(\coefv)}=m$ and denote its support by $\indset = \supp(\coefv)$.    
In statistics $\dictm$ is referred to as the design matrix, whereas in the signal processing literature it is often called the \emph{dictionary}. We refer to the columns $\atom_i$ of $\dictm$ as the \emph{atoms} of the dictionary and assume for simplicity that they are normalized to have unit norm $\norm{\atom_i} = 1$.

In sparse recovery, the observed signal $\sig$, the dictionary $\dictm$ and the sparsity level $\slev$ are given as input, and the goal is to output an estimate $\hat{\coefv}$ that is close to the unknown vector $\coefv$. Under the assumption that $\err$ is Gaussian and independent of $\coefv$, the maximum likelihood solution is
\begin{equation}\label{eq:SR_problem}
\hat{\coefv} = \arg\min \left\lbrace \norm{\sig-\dictm\bm{z}}_2: \norm{\bm{z}}_0\leq\slev\right\rbrace .
\end{equation}

In the noiseless case $\sigma=0$, minimizing \eqref{eq:SR_problem} is equivalent to finding an $\slev$-sparse vector $\hat{\coefv}$ such that $\sig = \dictm \hat{\coefv}$.
For $\dim<\atnum$ this linear system is underdetermined and 
may have multiple solutions. Hence, for any $\sigma\geq 0$, Eq. \eqref{eq:SR_problem} may in general also have multiple solutions.
In certain regimes there exists a unique solution, for example when $\slev$ is small compared to the size of the smallest linearly-dependent subset of dictionary atoms \citep{donoho2003optimally}.
Furthermore, even if a unique solution exists, finding it is in general NP-hard because the sparsity constraint is non-convex \citep{davis1997adaptive}.  
Over the last decades, several polynomial-time methods were developed for estimating $\hat{\coefv}$.
Convex optimization-based methods such as Basis Pursuit use a relaxation of the $l_0$-norm of $\coefv$ to its $l_1$-norm \citep{tibshirani1996regression,chen2001atomic}. 
Other recovery methods use non-convex penalty functions that promote sparsity \citep{chartrand2008iteratively,daubechies2010iteratively,figueiredo2007majorization}. 
Greedy methods estimate $\coefv$ by iteratively selecting atoms that have high correlation with the residual part of the signal \citep{dai2009subspace,needell2009cosamp,needell2010signal}.
For a recent review of sparse recovery algorithms, see \citep{marques2019review} and the references therein.

In this work, we focus on Orthogonal Matching Pursuit (OMP), described in Algorithm \ref{alg:OMP}, which is one of the simplest and fastest greedy methods for sparse recovery \citep{chen1989orthogonal,pati1993orthogonal,mallat1993matching}.
One key challenge in OMP computing an estimate $\hat{\coefv}$ close to $\coefv$ is to accurately estimate its support.
Hence, several authors studied conditions under which OMP exactly recovers the support of $\coefv$.

\begin{algorithm}[H]
	\caption{OMP}
	\label{alg:OMP}
	\textbf{Input}  dictionary $\dictm\in\mathbb{R}^{\dim\times \atnum}$, signal $\sig\in\mathbb{R}^\dim$, sparsity level $\slev$\\
	\textbf{Output} estimated vector $\hat{\coefv}_\slev\in\mathbb{R}^{ \atnum}$
	\begin{algorithmic}[1]          
		\State initialize the residual $\res_0=\sig$ and the estimated support $\hat{\indset}_0=\emptyset$
		\For{$t=1,\dots,\slev$} 
		\State  calculate $j=\arg\max\left\lbrace  \lvert \langle \atom_i,\res_{t-1}\rangle \rvert: i\in [\atnum]\right\rbrace $ 
		\State  add $\hat{\indset}_t=\hat{\indset}_{t-1}\cup\{j\}$
		\State  calculate $\hat{\coefv}_t= \arg\min \left\lbrace \left\Vert \sig-\dictm\coefv\right\Vert_2 :\coefv\in\mathbb{R}^{\atnum},\supp(\coefv)=\hat{\indset}_{t}\right\rbrace  $
		\State  update $\res_t=\sig-\dictm \hat{\coefv}_t$
		\EndFor
	\end{algorithmic}
\end{algorithm}

Several conditions for exact support recovery by OMP and by other methods have been studied. These include the Restricted Isometry Property (RIP) \citep{candes2005decoding}, the Exact Recovery Condition (ERC) \citep{tropp2004greed} and the Mutual Incoherence Property (MIP) \citep{donoho2001uncertainty}. 
For RIP and ERC based guarantees, see \citep{cai2018improved,hashemi2016sparse} and the references therein.
While MIP is more restrictive than the other conditions, it is simple and tractable to compute for arbitrary dictionaries. In this work we thus restrict our attention to coherence-based guarantees.
Specifically, the coherence of the dictionary $\dictm$ is defined as 
\begin{equation}\label{eq:coh}
\coh=\coh\left( \dictm\right) =\max_{i\neq j}\left|\innerp{\atom_{i}}{\atom_{j}}\right|.
\end{equation}
An $\slev$-sparse vector $\coefv$ satisfies the Mutual Incoherence Property (MIP) if
\begin{equation}
\label{eq:mip_cond}
\coh<\frac{1}{2\slev-1}.
\end{equation}
A fundamental result by \citet{tropp2004greed} is that the MIP condition is sufficient for exact support recovery by OMP in the noiseless case.
\citet{cai2010stable} proved that the MIP condition is sharp in the following setting: 
for each pair of positive integers $\left( \slev,k\right) $, there exist a dictionary of size $ 2\slev k \times \left( 2\slev-1\right)  k$ with coherence $\coh=\frac{1}{2\slev-1}$ and an $\slev$-sparse vector such that OMP fails to recover its support.

In the presence of additive Gaussian noise with noise level $\sigma>0$, even if an \(m\)-sparse vector \(\coefv\) satisfies the MIP condition \eqref{eq:mip_cond},
its exact support recovery will depend on the specific noise realization in the observed signal $\sig$. Hence, exact support recovery can only be guaranteed with a success probability $P_{\operatorname{succ}}<1$, which in general depends on the noise level $\sigma$, the sparsity level $\slev$, the magnitude of the non-zero coefficients of $\coefv$, the dictionary dimensions $\dim$ and $\atnum$ and the coherence $\coh$.
As we review in Section \ref{sec:results}, \citet{ben2010coherence} developed a sufficient condition for OMP to recover the support of $\coefv$ in the presence of additive Gaussian noise with high probability. A similar result for a variant of OMP was proved by \citet{cai2011orthogonal}.
\citet{miandji2017probability} derive a similar sufficient condition in a different model where the nonzero elements of $\coefv$ are random variables.

In this paper we make two key contributions. First, in Theorem \ref{thm:sharp_BH} we derive a sharper sufficient condition than that of \citet{ben2010coherence} and \citet{cai2011orthogonal} by performing a tighter analysis of their proof. An interesting question is whether this sufficient condition is sharp, or can it be lowered further.
Our main result, stated formally in Theorem \ref{thm:LB}, shows that this sharper sufficient condition is quite tight. Specifically, for a wide range of sparsity levels $\slev$, dictionary dimensions $\dim$, $\atnum$ and coherence values $\coh$, there exist a dictionary $\dictm$ and a vector $\coefv$ with a signal-to-noise ratio that is slightly lower than that of our sufficient condition, for which with high probability OMP fails to recover its support.
In Section \ref{sec:sim} we present several simulations that support our theoretical analysis.
All proofs can be found in Section \ref{sec:proofs}.

        \section{Main Results}\label{sec:results}
We first introduce some notation. We denote $\xmin=\min_{i\in\indset}\abs{\coefv_{i}}$ and define the following effective noise factor \[\sigeff = \frac{\sigma}{1-\left(2\slev-1\right)\coh}. \]
Throughout the paper we assume that the MIP condition \eqref{eq:mip_cond} holds, so $\sigeff$ is well defined and strictly positive.

For measurements that are corrupted by additive Gaussian noise, \citet{ben2010coherence} derived the following sufficient condition for OMP to recover the support of $\coefv$ with high probability.
\begin{theorem}[\citet{ben2010coherence}]\label{thm:BenHaim}
	Let $\coefv$ be an unknown vector with known sparsity $\slev$, and let $\sig = \dictm \coefv+\sigma\err$, where $\dictm\in \mathbb{R}^{\dim \times \atnum} $ is a dictionary with normalized columns and coherence $\coh$, and $\err\sim N\left(\bm 0,\bm I_{\dim}\right)$.
	Suppose that the MIP condition \eqref{eq:mip_cond} holds and that for some $\alpha\geq 0$
	\begin{equation}\label{eq:SNR_upper}
	\xmin\geq 2\sigeff  \sqrt{2\left(1+\alpha \right) \log \atnum}.
	\end{equation}
	Then, OMP with $\slev$ iterations successfully recovers the support of $\coefv$ with probability at least 
	\begin{equation}\label{eq:BH_prob}
	1-\frac{1}{\atnum^\alpha \sqrt{\pi\left(1+\alpha \right)\log \atnum}}.
	\end{equation}
\end{theorem}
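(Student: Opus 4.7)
The plan is to prove Theorem \ref{thm:BenHaim} by induction on the iterations $t=1,\ldots,\slev$ of OMP, combining the deterministic mutual-incoherence margin from the noiseless analysis with a Gaussian tail bound for the noise-contaminated correlations. At each step I would show that, on a single high-probability event, the inductive hypothesis $\hat{\indset}_{t-1}\subseteq\indset$ propagates to $\hat{\indset}_t\subseteq\indset$.

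For the inductive step, set $T=\hat{\indset}_{t-1}$ and $S=\indset\setminus T$, and let $P_T^{\perp}$ denote orthogonal projection onto $\operatorname{span}(\dictm_T)^{\perp}$. Since $\hat{\coefv}_{t-1}$ is the least-squares fit on $T$ and $T\subseteq\indset$, the residual decomposes as
\[
\res_{t-1} = P_T^{\perp}\dictm_S\coefv_S + \sigma\, P_T^{\perp}\err .
\]
A standard mutual-incoherence computation (in the spirit of Tropp's ERC analysis) applied to the noiseless part $\tilde{\res}_{t-1}=P_T^{\perp}\dictm_S\coefv_S$ yields the quantitative margin
\[
\max_{i\in S}\abs{\innerp{\atom_i}{\tilde{\res}_{t-1}}} - \max_{j\notin\indset}\abs{\innerp{\atom_j}{\tilde{\res}_{t-1}}} \geq \bigl(1-(2\slev-1)\coh\bigr)\xmin ,
\]
which is strictly positive under \eqref{eq:mip_cond}. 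The triangle inequality then reduces the inductive step to the deterministic sufficient condition
\[
2\sigma\max_{j\in[\atnum]}\abs{\innerp{P_T^{\perp}\atom_j}{\err}} < \bigl(1-(2\slev-1)\coh\bigr)\xmin .
\]

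Each inner product $\innerp{P_T^{\perp}\atom_j}{\err}$ is a centered Gaussian with variance $\norm{P_T^{\perp}\atom_j}^2\leq 1$. Applying the Mills-ratio tail bound $\Pr(\abs{Z}>u)\leq\sqrt{2/\pi}\,u^{-1}e^{-u^2/2}$ for $Z\sim\Nor(0,1)$, together with a union bound over the $\atnum$ atoms and the choice $u=\sqrt{2(1+\alpha)\log\atnum}$, gives
\[
\frac{2\atnum}{u\sqrt{2\pi}}\,e^{-u^2/2} = \frac{1}{\atnum^{\alpha}\sqrt{\pi(1+\alpha)\log\atnum}} ,
\]
which matches the failure probability \eqref{eq:BH_prob}. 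Substituting this choice of $u$ into the deterministic sufficient condition and dividing by $1-(2\slev-1)\coh$ recovers exactly the SNR hypothesis \eqref{eq:SNR_upper}.

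The main obstacle is that $P_T^{\perp}$ depends on the random history of OMP through $T=\hat{\indset}_{t-1}$, so the Gaussian concentration argument cannot be applied naively at each of the $\slev$ iterations without paying an unwanted $\slev$ (or, worse, $2^{\slev}$) factor in the failure probability. Resolving this requires formulating a \emph{single} event on $\err$ that implies success at every iteration: one natural route is to decompose $\err$ relative to $\operatorname{span}(\dictm_{\indset})$ and its orthogonal complement, exploit the fact that for any $T\subseteq\indset$ the projection $P_T^{\perp}$ acts as the identity on $\operatorname{span}(\dictm_{\indset})^{\perp}$, and thereby reduce the uniform-in-$T$ control of $\{\innerp{P_T^{\perp}\atom_j}{\err}\}_{j}$ to a fixed collection of at most $\atnum$ Gaussian linear functionals of $\err$, each of variance at most one. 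This is the delicate part of Ben-Haim, Eldar and Elad's argument and is what allows the final probability bound to involve only the factor $\atnum$ and not also $\slev$.
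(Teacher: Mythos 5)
Your overall skeleton --- induction over the iterations, a deterministic coherence margin of $(1-(2\slev-1)\coh)\xmin$, a Gaussian tail bound at level $u=\sqrt{2(1+\alpha)\log\atnum}$, and a union bound over the $\atnum$ atoms --- is the right one, and your numerology correctly recovers both \eqref{eq:SNR_upper} and \eqref{eq:BH_prob}. The gap is precisely in the step you yourself flag as delicate. Having written $\res_{t-1}=P_T^{\perp}\dictm_S\coefv_S+\sigma P_T^{\perp}\err$, you need control of $\innerp{P_T^{\perp}\atom_j}{\err}$ that is uniform over the data-dependent set $T=\hat{\indset}_{t-1}$. Your proposed fix --- split $\err$ relative to $\operatorname{span}(\dictm_{\indset})$ and use that $P_T^{\perp}$ acts as the identity on $\operatorname{span}(\dictm_{\indset})^{\perp}$ --- does not close this: writing $\err^{\parallel}$ for the component of $\err$ in $\operatorname{span}(\dictm_{\indset})$, one has
\[
\innerp{P_T^{\perp}\atom_j}{\err}=\innerp{\atom_j}{\err}-\innerp{P_T\atom_j}{\err^{\parallel}},
\]
and the second term still depends on the random set $T$, so you have not reduced to a fixed collection of $\atnum$ Gaussian functionals. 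Bounding it crudely by $\norm{\err^{\parallel}}$ costs a factor of order $\sqrt{\slev}$, which would weaken the theorem.

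The resolution used in the paper (in the proof of Theorem \ref{thm:sharp_BH}, which is a sharpened version of exactly this argument) is simpler: do not project the noise at all. Since $\hat{\indset}_{t}\subseteq\indset$ under the inductive hypothesis, write $\res_{t}=\sig-\dictm\hat{\coefv}_{t}=\dictm(\coefv-\hat{\coefv}_{t})+\sigma\err$ as in Eq.~\eqref{eq:r_t}, so that for every atom and every iteration the noise contribution to $\innerp{\atom_i}{\res_{t}}$ is the \emph{same} fixed standard Gaussian $\innerp{\atom_i}{\sigma\err}$, while the signal contribution $\innerp{\atom_i}{\dictm\bm{z}}$ with $\bm{z}=\coefv-\hat{\coefv}_{t}$ supported on $\indset$ and $\norm{\bm{z}}_{\infty}\geq\xmin$ is bounded deterministically by coherence, uniformly over all such $\bm{z}$ (this is Lemma \ref{lem:sharp_induc_gur}). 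A single good event of the form $\max_{i\in[\atnum]}\abs{\innerp{\atom_i}{\sigma\err}}<\sigma\sqrt{2(1+\alpha)\log\atnum}$ then yields success at all $\slev$ iterations with no extra factor of $\slev$, and the noise term now enters both the support and non-support maxima, which is exactly where the factor $2$ in \eqref{eq:SNR_upper} comes from. If you replace your projected decomposition by this one, the rest of your argument goes through essentially verbatim.
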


In many practical cases $\slev$ is unknown while the noise level $\sigma$ is known. Denote by OMP* a variant of Algorithm \ref{alg:OMP} where instead of performing $\slev$ iterations, the algorithm stops when the maximal correlation of the residual with any dictionary atom is smaller than a threshold $\tau$, i.e., $\norm{\dictm^T\res_{t}}_{\infty}\leq\tau$.
\citet[Thm. 8]{cai2011orthogonal} proved the following analogue of Theorem \ref{thm:BenHaim}. Under the MIP condition \eqref{eq:mip_cond} and the same condition \eqref{eq:SNR_upper}, OMP* with threshold $\tau= \sigma \sqrt{2(1+\alpha)\log \atnum}$ recovers the support of $\coefv$ with probability at least $1-\slev/\atnum^\alpha\sqrt{2\log \atnum}$.

\subsection{Sharper sufficient condition}\label{sec:suff}
By performing a tighter analysis of the proofs of \citet{ben2010coherence} and \citet{cai2011orthogonal}, we derive a sharper sufficient condition than \eqref{eq:SNR_upper} for exact support recovery by both OMP and OMP*.
However, this sharper sufficient condition comes at a price, whereby the success probability is a function not only of the vector length $\atnum$, but also of its sparsity level $\slev$. 
The following theorem formalizes this statement and is proved in Section \ref{sec:proof_thm_sharp_BH}.
\begin{theorem}\label{thm:sharp_BH}
	Let $\coefv$ be an unknown vector with known sparsity $\slev$, and let $\sig = \dictm \coefv+\sigma\err$, where $\dictm\in \mathbb{R}^{\dim \times \atnum} $ is a dictionary with normalized columns and coherence $\coh$, and $\err\sim N\left(\bm 0,\bm I_{\dim}\right)$.
	Suppose that the MIP condition \eqref{eq:mip_cond} holds, that $\slev\leq\atnum^\beta$ for some $0< \beta < 1$ and that for some $\alpha\geq 0$
	\begin{equation}\label{eq:SNR_upper_sharp}
	\xmin\geq\sigeff\left(1+\sqrt{\beta}\right)\sqrt{2\left(1+\alpha\right)\log\atnum}.
	\end{equation}
	Then, OMP with $\slev$ iterations successfully recovers the support of $\coefv$ with probability at least 
	\begin{equation}\label{eq:sharp_pr}
	1-\frac{1}{\sqrt{\pi\left(1+\alpha\right)\log \atnum}}\left(\frac{1}{ \atnum^{\alpha}}+\frac{1}{ \atnum^{\alpha\beta}\sqrt{\beta}}\right).
	\end{equation}
	Moreover, under the same conditions OMP* with threshold $\tau=\sigma \sqrt{2(1+\alpha)\log \atnum}$ successfully recovers the support of $\coefv$ with probability at least \eqref{eq:sharp_pr}.
\end{theorem}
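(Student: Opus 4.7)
The plan is to follow the framework of \citet{ben2010coherence} and \citet{cai2011orthogonal}, but refine their union bound over all $\atnum$ dictionary atoms by splitting it into two parts that separately exploit the constraint $\slev\leq\atnum^\beta$. I would proceed by induction on $t=1,\ldots,\slev$, showing that whenever $\hat{\indset}_{t-1}\subseteq\indset$, the greedy step at iteration $t$ also selects an atom from $\indset$. Writing $\res_{t-1}=(I-P_{\hat{\indset}_{t-1}})(\dictm_{\indset}\coefv_{\indset}+\sigma\err)$, the greedy step is correct whenever
\[
\max_{i\in\indset\setminus\hat{\indset}_{t-1}}|\langle\atom_i,\res_{t-1}\rangle|>\max_{j\notin\indset}|\langle\atom_j,\res_{t-1}\rangle|.
\]
Bounding signal correlations via MIP as in \citet{tropp2004greed} and \citet{ben2010coherence}, and absorbing the projection-induced amplification into the factor $\sigeff=\sigma/(1-(2\slev-1)\coh)$, this inequality reduces to requiring that $\xmin(1-(2\slev-1)\coh)$ exceed $A+B$, where $A=\sigma\max_{i\in\indset}|\langle\atom_i,\err\rangle|$ and $B=\sigma\max_{j\notin\indset}|\langle\atom_j,\err\rangle|$.

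The key sharpening is to threshold $A$ and $B$ differently. Since each $\langle\atom_k,\err\rangle$ is a standard Gaussian, the Mills ratio bound $\textup{P}(|Z|>u)\leq\sqrt{2/\pi}\,u^{-1}e^{-u^2/2}$ together with a union bound over at most $\slev\leq\atnum^\beta$ atoms in $\indset$ yields
\[
\textup{P}\!\left(A>\sigma\sqrt{2\beta(1+\alpha)\log\atnum}\right)\leq\frac{1}{\atnum^{\alpha\beta}\sqrt{\beta}\cdot\sqrt{\pi(1+\alpha)\log\atnum}},
\]
which is precisely the second term of \eqref{eq:sharp_pr}. Similarly, a union bound over the at most $\atnum$ atoms outside $\indset$ at threshold $\sigma\sqrt{2(1+\alpha)\log\atnum}$ produces the first term. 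Off both bad events, $A+B\leq\sigma(1+\sqrt{\beta})\sqrt{2(1+\alpha)\log\atnum}$, and rewriting \eqref{eq:SNR_upper_sharp} as $\xmin(1-(2\slev-1)\coh)\geq\sigma(1+\sqrt{\beta})\sqrt{2(1+\alpha)\log\atnum}$ closes the induction for OMP.

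For OMP$^*$ the same good event controls both directions of the threshold test: off the bad event for $B$, $\max_{j\notin\indset}|\langle\atom_j,\res_{t-1}\rangle|$ stays below $\tau=\sigma\sqrt{2(1+\alpha)\log\atnum}$, so no incorrect atom is ever selected; off the bad event for $A$, combined with the SNR hypothesis, $\max_{i\in\indset\setminus\hat{\indset}_{t-1}}|\langle\atom_i,\res_{t-1}\rangle|$ stays above $\tau$ until $\hat{\indset}_t=\indset$, so the algorithm neither terminates early nor continues past the true support.

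The main technical obstacle is that the quantity actually appearing in the residual is $\langle\atom_i,(I-P_{\hat{\indset}_{t-1}})\err\rangle$, whose distribution depends on the data-dependent random set $\hat{\indset}_{t-1}$, ruling out a naive union bound on these specific Gaussians. Following \citet{ben2010coherence}, the resolution is to decompose this projected noise via MIP into a linear combination of the unprojected quantities $\dictm_{\indset}^{T}\err$ and $\dictm_{\indset^c}^{T}\err$, whose joint distribution is fixed in advance. The MIP-dependent coefficients arising from this decomposition are precisely what produces the $1/(1-(2\slev-1)\coh)$ amplification folded into $\sigeff$, so that only $\atnum$ a priori independent of $t$ Gaussians must be controlled, which the two-part union bound above handles cleanly.
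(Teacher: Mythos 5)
Your central sharpening --- splitting the union bound into the $\slev$ support atoms at threshold $\sigma\sqrt{2\beta(1+\alpha)\log\atnum}$ and the non-support atoms at threshold $\sigma\sqrt{2(1+\alpha)\log\atnum}$, then invoking $\slev\le\atnum^\beta$ --- is exactly the paper's argument and reproduces both terms of \eqref{eq:sharp_pr} (the paper routes the probability bound through \v{S}id\'ak's inequality, but after the elementary estimate $(1-a)^n\ge 1-an$ it lands on the same bound as your Mills-ratio union bound). The deterministic comparison reducing correct selection to $\xmin\bigl(1-(2\slev-1)\coh\bigr)$ exceeding the two noise maxima is likewise the paper's key lemma.

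However, your stated resolution of the ``main technical obstacle'' is not the one that works, and taken literally it would not deliver the constant $1+\sqrt{\beta}$. You propose to bound $\langle\atom_i,(I-P_{\hat\indset_{t-1}})\err\rangle$ by expanding the projected noise in terms of $\dictm_{\indset}^{T}\err$ with MIP-controlled coefficients, and you assert that these coefficients are what produce the $1/(1-(2\slev-1)\coh)$ inside $\sigeff$. If you actually carry out that expansion, each noise correlation acquires an extra additive term of order $\frac{\slev\coh}{1-(\slev-1)\coh}\max_{k\in\indset}\left|\langle\atom_k,\err\rangle\right|$, which strictly worsens the required SNR beyond \eqref{eq:SNR_upper_sharp}. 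The paper never touches projected noise during the iterations: since $\dictm\hat{\coefv}_{t}=P_{\hat\indset_{t}}\sig$ and $\hat\indset_{t}\subseteq\indset$, the residual is rewritten as $\res_{t}=\dictm(\coefv-\hat{\coefv}_{t})+\sigma\err$, i.e.\ the entire least-squares fit (including its noise-fitting component) is absorbed into a coefficient vector supported on $\indset$ whose largest entry is still at least $\xmin$. The noise entering every correlation is then the \emph{unprojected} $\sigma\err$, so the two events on the raw correlations suffice with no amplification, and the factor $1-(2\slev-1)\coh$ arises purely from the deterministic signal cross-correlation bounds, not from the noise. Your OMP* termination step has the same issue at $t=\slev$, where the residual really is $(I-P_{\indset})\sigma\err$ rather than $\sigma\err$ and the threshold test must be checked for that quantity; the paper treats this by a separate bound after all support atoms have been selected, rather than by the event controlling $B$ alone.
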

Eq. \eqref{eq:SNR_upper_sharp} is sharper than Eq. \eqref{eq:SNR_upper} since $\beta<1$.
Simulations in Section \ref{sec:sim} illustrate the tightness of this result.

\subsection{Near-tightness of the OMP recovery guarantee}\label{sec:necessary}

According to either Eq. \eqref{eq:BH_prob} or \eqref{eq:sharp_pr}, the smallest $\alpha$ that still guarantees exact support recovery with probability tending to $1$ as $\atnum\rightarrow\infty$ is $\alpha=0$. Therefore, the weakest sufficient condition for OMP to recover the exact support of $\coefv$ with high probability for $N\gg 1$ is
\begin{equation}\label{eq:apprx_ub}
\xmin\geq \sigeff \left(1+\sqrt{\beta}\right) \sqrt{2\log \atnum}.
\end{equation}
An interesting question is thus whether this sufficient condition is sharp, or could the right hand side in \eqref{eq:apprx_ub} be lowered further. 

The main result of this paper, formalized in Theorem \ref{thm:LB} below, is that the above condition is quite tight. 
Informally, our result can be stated as follows: for a wide range of sparsity levels $\slev$, dictionary dimensions $\dim$, $\atnum$ and coherence values $\coh$, there exist a dictionary $\dictm\in\R^{\dim\times\atnum}$ and an $\slev$-sparse vector $\coefv\in\R^{\atnum}$ with
\begin{equation}\label{eq:apprx_lb}
\xmin\approx\sigeff\left( 1-\coh-\sqrt{\beta}\right) \sqrt{2\log\atnum},
\end{equation}
for which OMP fails to recover its support with probability $1-o(1)$. In particular, the failure probability for this specific $ \dictm$ and $\coefv$ tends to $1$ as $\atnum\rightarrow\infty$.
As shown by the simulations in Section \ref{sec:sim}, OMP fails with high probability under condition \eqref{eq:apprx_lb} in a much broader range of cases.
These include a case where the dictionary atoms are drawn independently and uniformly at random from the unit sphere and a case where the dictionary is composed of two orthogonal matrices (the identity matrix and the Hadamard matrix with normalized columns).

If $\slev$ is constant or polylogarithmic in $\atnum$, then as $\atnum\rightarrow\infty$ we can take $\beta>0$ arbitrarily small. In this case, the bounds \eqref{eq:apprx_ub} and \eqref{eq:apprx_lb} match, up to a multiplicative factor of $1-\coh$.
Finally, for various dictionaries the coherence $\coh$ is itself small. For example, if each entry of the dictionary is drawn independently and uniformly at random from $\pm 1/\sqrt{\dim} $, then with probability exceeding $1-\delta^2$ the coherence is $\coh\leq 2\sqrt{\dim^{-1}\log\frac{\atnum}{\delta}}$ \citep{tropp2007signal}.
Hence, $\coh\rightarrow 0$ if $\atnum$ is sub-exponential in $\dim$.

To formally state our theorem, we introduce the following notations.   
First, let
\begin{equation}\label{eq:eps}
\rho = \rho\left(\slev,\coh \right)  = \sqrt{\frac{1-(\slev-1)\coh}{\slev}}
\end{equation}
and 
\begin{equation}\label{eq:tmu}
\tmu  = \tmu\left(\slev,\coh \right) = \frac{\coh^2}{\rho^2} = \frac{\coh^2\slev}{1-(\slev-1)\coh}.
\end{equation}  
Both quantities are well defined, since by the MIP condition \eqref{eq:mip_cond}, $1-(\slev-1)\coh>0$. It can be easily shown that $\sqrt{\coh}<\rho\leq 1/\sqrt{\slev}$ and $\tmu<\coh$.
Next, denote $\tilde{\dim}=\dim-\slev$ and $\tilde{\atnum}=\atnum-\slev$.
Let $\coh_{\min}\left( a,b\right) $ be the smallest possible coherence of an $a\times b$ overcomplete dictionary with $a<b$.
To prove our theorem we construct a dictionary that consists of several parts. One of these parts is a $\tilde{\dim}\times \tilde{\atnum}$ dictionary with coherence $L = L\left( \tilde{\dim},\tilde{\atnum}\right) =\coh_{\min}\left( \tilde{\dim},\tilde{\atnum}\right)$. By the theory of Grassmannian frames,  $L\geq\sqrt{\frac{\tilde{\atnum}-\tilde{\dim}}{\tilde{\dim}(\tilde{\atnum}-1)}}$ \citep[see for example][]{strohmer2003grassmannian}.      
In fact, $L$ may be strictly higher since Grassmannian frames do not exist for every pair $(\tilde{\dim},\tilde{\atnum})$. 
However it can not be much higher, since by \citet{tropp2007signal} $L\leq 2\sqrt{\tilde{\dim}^{-1}\log \tilde{\atnum}}$.

We now give a rigorous statement of our result, whose proof appears in Section \ref{sec:proof_thm_LB}.
\begin{theorem}\label{thm:LB}
	Let $\dim,\atnum$ be integers such that $\dim < \atnum $. Let $\slev$ be an integer and let $\coh$ be a number that satisfy the MIP condition \eqref{eq:mip_cond} and the following set of inequalities:
	\begin{equation}\label{eq:sparsity_beta}
	\slev \leq \min\left\lbrace \atnum^\beta, \dim \right\rbrace 
	\end{equation}          
	where $0< \beta < 1$,
	\begin{equation}\label{eq:sparsity_cond}
	\slev\leq \frac{3-L-\sqrt{8-8L}}{L},
	\end{equation}
	and
	\begin{equation}\label{eq:mu_cond} 
	\frac{\left(1+L\left(\slev-1\right)\right)\left(1-\sqrt{1-\frac{4L\left(2\slev-1-L\slev\right)}{\left(1+L\left(\slev-1\right)\right)^{2}}}\right)}{2\left(2\slev-1-L\slev\right)}\leq\coh\leq\frac{\left(1+L\left(\slev-1\right)\right)\left(1+\sqrt{1-\frac{4L\left(2\slev-1-L\slev\right)}{\left(1+L\left(\slev-1\right)\right)^{2}}}\right)}{2\left(2\slev-1-L\slev\right)}.
	\end{equation}
	Then, there exists a dictionary $\dictm\in \R^{\dim\times\atnum}$ with coherence $\coh$ and a corresponding $\slev$-sparse vector $\coefv\in \R^{\atnum}$ satisfying 
	\begin{eqnarray}
	\label{eq:SNR_Cond}
	\xmin =  \sigeff  \cdot 
	\Bigg\{  \sqrt{2(1-\coh)(1-\tmu)\log\tilde{\atnum}}-\sqrt{2\beta\left( 1-\rho^{2}\right) \log\atnum} \nonumber \\
	-c_{0}\sqrt{(1-\tmu)\log\log\tilde{\atnum}}-\left( \rho+\sqrt{\tmu}\right) \sqrt{2\log\log\atnum}\Bigg\}
	\end{eqnarray}
	where $c_0>0$ is a universal constant, such that with probability at least \[P_0 = 1-6e^{-C\sqrt{\log\log\tilde{\atnum}\min\left\lbrace \coh^{-1},\;\log\tilde{\atnum}\right\rbrace }}-\left( {\log\atnum\sqrt{\pi\log\log\atnum}}\right) ^{-1}-\left( {\sqrt{\pi\beta\log \atnum}}\right) ^{-1},\] OMP fails to recover the support of $\coefv$ from $\sig = \dictm \coefv+\sigma\err$.
\end{theorem}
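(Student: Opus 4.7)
The plan is to explicitly construct a dictionary $\dictm$ and an $\slev$-sparse vector $\coefv$ for which OMP picks an out-of-support atom already in its very first iteration. I first place the $\slev$ support atoms $\atom_1,\dots,\atom_\slev$ in an $\slev$-dimensional subspace $V\subset \R^{\dim}$ so that every pair has inner product $-\coh$; the normalized sum direction $\atomt := \|\sum_{i\in\indset}\atom_i\|^{-1}\sum_{i\in\indset}\atom_i$ then satisfies $\innerp{\atomt}{\atom_i}=\rho$ for every $i\in\indset$, which is the origin of the parameter $\rho$ in \eqref{eq:eps}. The remaining $\tilde{\atnum}=\atnum-\slev$ atoms are constructed as
\[
\atom_{\slev+k} \;=\; \tfrac{\coh}{\rho}\,\atomt \;+\; \sqrt{1-\tmu}\,\atome_k,
\qquad k=1,\dots,\tilde{\atnum},
\]
where $\atome_1,\dots,\atome_{\tilde{\atnum}}$ is a dictionary in the $\tilde{\dim}$-dimensional subspace $V^{\perp}$ attaining the minimal coherence $L$. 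The sparse vector is $\coefv_i=\xmin$ for $i\in\indset$. A short computation shows that conditions \eqref{eq:sparsity_cond}--\eqref{eq:mu_cond} are precisely what is needed so that every support-to-off-support inner product equals $\coh$ in absolute value and every off-support pair has inner product of magnitude at most $\tmu + (1-\tmu)L \leq \coh$, so the dictionary has coherence exactly $\coh$.

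I then compute the inner products of $\sig=\dictm\coefv+\sigma\err$ with every atom, obtaining
\[
\innerp{\atom_j}{\sig} = \xmin\bigl(1-(\slev-1)\coh\bigr) + \sigma g_j, \qquad
\innerp{\atom_{\slev+k}}{\sig} = \xmin\slev\coh + \sigma\bigl(\sqrt{\tmu}\,g_t + \sqrt{1-\tmu}\,h_k\bigr),
\]
with $g_j:=\innerp{\atom_j}{\err}$, $g_t:=\innerp{\atomt}{\err}$ and $h_k:=\innerp{\atome_k}{\err}$. In the parameter range of the theorem, the signal contributions $\xmin(1-(\slev-1)\coh)$ and $\xmin\slev\coh$ are positive and, with high probability, dominate the noise in the sense that the maxima over $j$ and over $k$ are positive; hence the absolute values in OMP's selection rule may be dropped. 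Rearranging $\max_k\innerp{\atom_{\slev+k}}{\sig}>\max_j\innerp{\atom_j}{\sig}$, and using $\sigma=\sigeff(1-(2\slev-1)\coh)$ together with $\coh/\rho=\sqrt{\tmu}$, reduces the event ``OMP fails at step 1'' to
\[
\xmin \;<\; \sigeff\Bigl[\sqrt{1-\tmu}\,\max_{k}h_k \;-\; \max_{j}g_j \;+\; \sqrt{\tmu}\,g_t\Bigr].
\]

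To extract the $(1-\rho^{2})$ and $(\rho+\sqrt{\tmu})$ factors of \eqref{eq:SNR_Cond} I decompose each in-support Gaussian by writing $\atom_j=\rho\atomt + \sqrt{1-\rho^{2}}\,\atom_j^{\perp}$ with $\atom_j^{\perp}\perp\atomt$, giving $g_j = \rho g_t + \sqrt{1-\rho^{2}}\,e_j$ where $e_j:=\innerp{\atom_j^{\perp}}{\err}$. A direct calculation yields $\innerp{\atom_j^{\perp}}{\atom_k^{\perp}} = -1/(\slev-1)$, so the $e_j$'s form an equiangular Gaussian system of $\slev$ vectors in an $(\slev-1)$-dimensional subspace. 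Substituting, the failure condition becomes
\[
\xmin \;<\; \sigeff\Bigl[\sqrt{1-\tmu}\,\max_k h_k \;-\; \sqrt{1-\rho^{2}}\,\max_j e_j \;+\; (\sqrt{\tmu}-\rho)\,g_t\Bigr],
\]
and I apply three independent tail estimates, each responsible for exactly one error term in $P_0$: (i) a routine Gaussian-maximum bound $\sqrt{1-\rho^{2}}\,\max_j e_j \leq \sqrt{2\beta(1-\rho^{2})\log\atnum}$ (since $\slev\leq\atnum^{\beta}$), with failure probability $(\sqrt{\pi\beta\log\atnum})^{-1}$; (ii) the scalar Gaussian tail $|g_t|\leq\sqrt{2\log\log\atnum}$, with failure probability $(\log\atnum\sqrt{\pi\log\log\atnum})^{-1}$, which, after taking the worst sign of $g_t$ in the coefficient $\sqrt{\tmu}-\rho$, contributes at most $(\rho+\sqrt{\tmu})\sqrt{2\log\log\atnum}$; and (iii) the main lower-tail estimate $\max_k h_k \geq \sqrt{2(1-\coh)\log\tilde{\atnum}} - c_0\sqrt{\log\log\tilde{\atnum}}$.

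The hard part is item (iii): producing a \emph{lower} bound on the maximum of $\tilde{\atnum}$ unit-variance jointly Gaussian random variables $h_1,\dots,h_{\tilde{\atnum}}$ whose pairwise correlations $\innerp{\atome_k}{\atome_l}$ are bounded in magnitude by $L\leq\coh$. Upper bounds follow from a trivial union bound, but lower-tail bounds on the maximum of correlated Gaussians are delicate: the natural route is to extract, via a Chen--Stein or second-moment comparison, an approximately uncorrelated subfamily of the $h_k$'s whose extreme value concentrates near $\sqrt{2(1-L)\log\tilde{\atnum}}$, with a quantitative rate that worsens as the correlations grow. This Poissonization error is exactly the unusual first term $6e^{-C\sqrt{\log\log\tilde{\atnum}\,\min\{\coh^{-1},\log\tilde{\atnum}\}}}$ appearing in $P_0$. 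Combining (i)--(iii) with $L\leq\coh$ and multiplying through by $\sqrt{1-\tmu}$ yields the stated lower bound \eqref{eq:SNR_Cond} on $\xmin$, below which OMP fails with probability at least $P_0$.
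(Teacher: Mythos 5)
Your construction and argument are essentially the paper's: the same block dictionary (support atoms with pairwise inner product $-\coh$, off-support atoms of the form $\sqrt{\tmu}\,\bar{\atome}+\sqrt{1-\tmu}\,\rr_i$ with the $\rr_i$ forming a minimal-coherence frame in the orthogonal complement), the same equal-magnitude sparse vector, failure forced at the first OMP iteration, the same decomposition of each support atom into its component along the normalized sum direction and an orthogonal remainder, and the same three independent events whose failure probabilities are exactly the three terms of $P_0$ (with the Lopes/Tanguy expectation-plus-superconcentration results playing the role of your item (iii), which the paper likewise invokes as external lemmas rather than reproving).

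The one step you justify incorrectly is dropping the absolute values on the support side. To conclude that OMP fails you need an \emph{upper} bound on $\max_{k\in\indset}\left|\left\langle \atom_{k},\sig\right\rangle \right|$, but $\max_j |X_j|\geq \max_j X_j$ in general, so replacing the former by the latter goes the wrong way unless \emph{all} $\slev$ support inner products are nonnegative; positivity of the maximum alone (your stated justification) does not suffice, since a large negative value at some other support index would make $\max_j|X_j|$ exceed $\max_j X_j$. Ensuring all-positivity requires $\xmin\left(1-(\slev-1)\coh\right)$ to dominate $\sigma\max_k\left|\left\langle\atome_k,\err_\slev\right\rangle\right|$, which is an extra condition to verify precisely in the regime where $\xmin$ is being pushed down. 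The paper sidesteps this entirely: it upper-bounds $\Ain$ by the deterministic triangle inequality $\Ain\leq \nu\left(1-(\slev-1)\coh\right)+\sigma\rho\left|\left\langle\bar{\atome},\err_\slev\right\rangle\right|+\sigma\max_k\left|\left\langle\atomeh_k,\err_\slev\right\rangle\right|$, which costs nothing (your own final bound already pays the worst-case signs $\rho+\sqrt{\tmu}$ and the two-sided bound on $\max_k\left|\left\langle\atomeh_k,\err_\slev\right\rangle\right|$) and requires no positivity event. Replacing your absolute-value-dropping step by this triangle inequality repairs the argument and lands you exactly on the paper's proof.
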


\begin{remark}\label{rem:mu_cond} %[Comparison between condition \eqref{eq:mu_cond} and MIP]
	Let us now illustrate that conditions \eqref{eq:sparsity_cond} and \eqref{eq:mu_cond} are not very restrictive.
	It is instructive to consider the over-complete case with $ \atnum= J \dim$ for $J>1$, with $\atnum,\dim \gg 1$ and sparsity $\slev$ much smaller than $\dim$, such that $\tilde{\dim}\approx\dim$. By the theory of Grassmannian frames $L\approx \frac{C(J)}{\sqrt{\dim}}$ for an appropriate $C(J)>0$ \citep{strohmer2003grassmannian}. Under the MIP condition \eqref{eq:mip_cond}, $\slev \lesssim 0.5\frac{\sqrt{\dim}}{C(J)}$, while under condition \eqref{eq:sparsity_cond}, $\slev \lesssim (3-\sqrt{8})\frac{\sqrt{\dim}}{C(J)}\approx 0.17 \frac{\sqrt{\dim}}{C(J)}$.
	
	For values of $\slev$ such that $ L\slev$ is much smaller than $1$, condition \eqref{eq:mu_cond} can be approximated by a binomial approximation $\sqrt{1-\varepsilon}\approx 1-\varepsilon/2$ for small $\varepsilon = \frac{4L\left(2\slev-1-L\slev\right)}{\left(1+L\left(\slev-1\right)\right)^{2}}$ as
	\[L \lesssim\coh\lesssim\frac{1}{2\slev-1}-L.\]
	The inequality $L \lesssim\coh$ follows essentially from frame lower bounds whereas the other inequality is very close to the MIP condition \eqref{eq:mip_cond}.
	Hence, condition \eqref{eq:mu_cond} is only slightly more restrictive than MIP.
	This comparison is visualized in Figure \ref{fig:m_mu}.
\end{remark}
\begin{figure}[t]
	\centering
	\begin{subfigure}[b]{0.45\linewidth}
		\includegraphics[width=\linewidth]{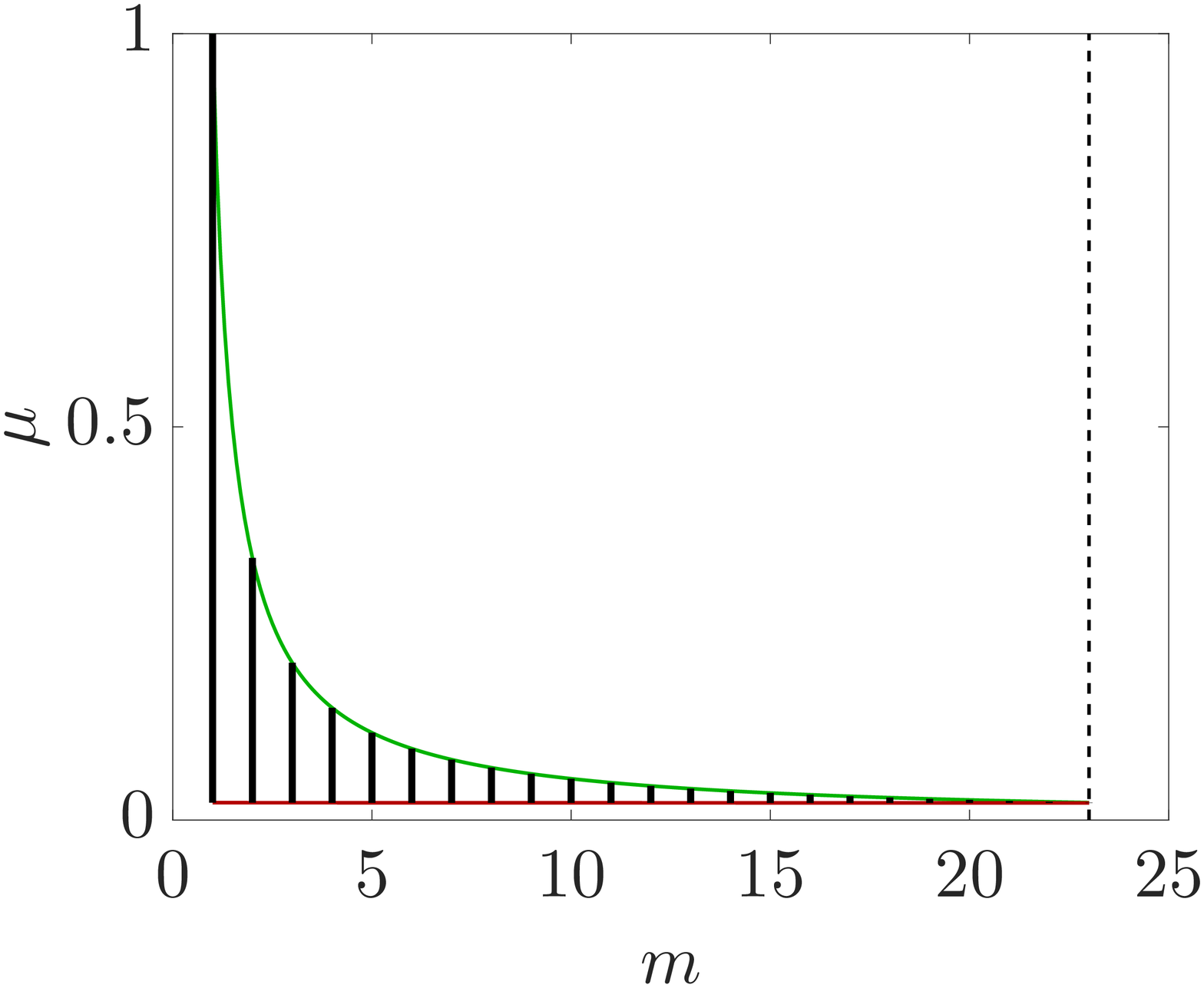}
		\caption{$\left( \slev,\coh\right) $ by the MIP condition \eqref{eq:mip_cond}}
	\end{subfigure}
	\begin{subfigure}[b]{0.45\linewidth}
		\includegraphics[width=\linewidth]{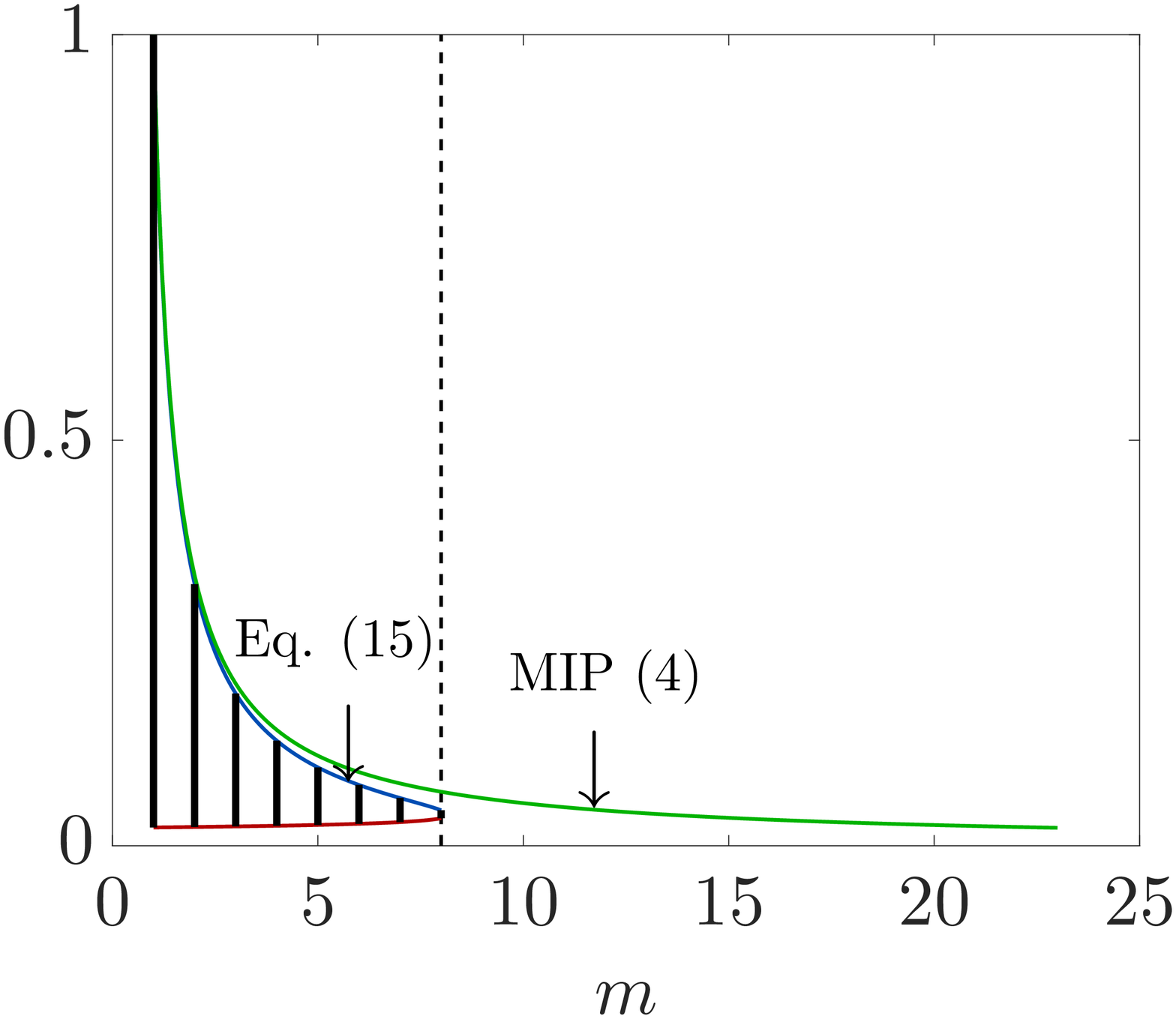}
		\caption{$\left( \slev,\coh\right) $ by condition \eqref{eq:mu_cond}}
	\end{subfigure}
	\caption{Example of requirements \eqref{eq:sparsity_cond} and \eqref{eq:mu_cond} on $\slev$ and $\coh$. 
		We set $\tilde{\dim}=1020$ and $\tilde{\atnum}=2040$, such that $L=\frac{1}{\sqrt{2039}}$ ($2039$ is a prime number), so different values of $\slev$ correspond to different values of $\dim$ and $\atnum$. 
		In the left panel, the solid black lines are the allowed values of $\slev$ and $\coh$ under the MIP condition \eqref{eq:mip_cond} and frame lower bounds.
		In the right panel, the solid black lines are the allowed values of $\slev$ and $\coh$ under \eqref{eq:mu_cond}. Condition \eqref{eq:sparsity_cond} is that the integer $\slev$ is to the left of the black dashed line.
		Note that the ratio between the largest possible $\slev$ value in each panel is approximately $\frac{0.5}{3-\sqrt{8}}\approx 2.9$.
	}
	\label{fig:m_mu}
\end{figure}
\begin{remark}
	We now show how Eq. \eqref{eq:SNR_Cond} may be approximated by Eq. \eqref{eq:apprx_lb}.
	First, for Theorem \ref{thm:LB} to be meaningful, the right hand side of Eq. \eqref{eq:SNR_Cond} must be positive. We now show that this is indeed the case for typical parameter values. If $\slev=\atnum^\beta$ for $\beta<1$, then  $\log\tilde{\atnum}=\log\atnum+\log\left( 1-\frac{1}{\atnum^{1-\beta}}\right) \approx\log\atnum$. Recall that $\tmu<\coh$ and that $\rho>0$. Hence, the first two terms on the right hand side of \eqref{eq:SNR_Cond} can be approximated as follows 
	\begin{eqnarray*}
		\sqrt{2(1-\coh)(1-\tmu)\log\tilde{\atnum}}-\sqrt{2\beta\left(1-\rho^{2}\right)\log\atnum} 
		& > & (1-\coh)\sqrt{2\log\tilde{\atnum}}-\sqrt{2\beta\log\atnum}\\
		& \approx & (1-\coh-\sqrt{\beta})\sqrt{2\log\atnum}.
	\end{eqnarray*}
	In addition, the last two terms on the right hand side of equation \eqref{eq:SNR_Cond} are small compared to the first term, since they are of order $\sqrt{\log\log\atnum}$.
	Hence, \eqref{eq:SNR_Cond} may be approximated by \eqref{eq:apprx_lb}.
\end{remark}

\section{Simulations}\label{sec:sim}
\begin{figure}[t]
	\centering
	\begin{subfigure}[b]{0.33\linewidth}
		\includegraphics[width=\linewidth]{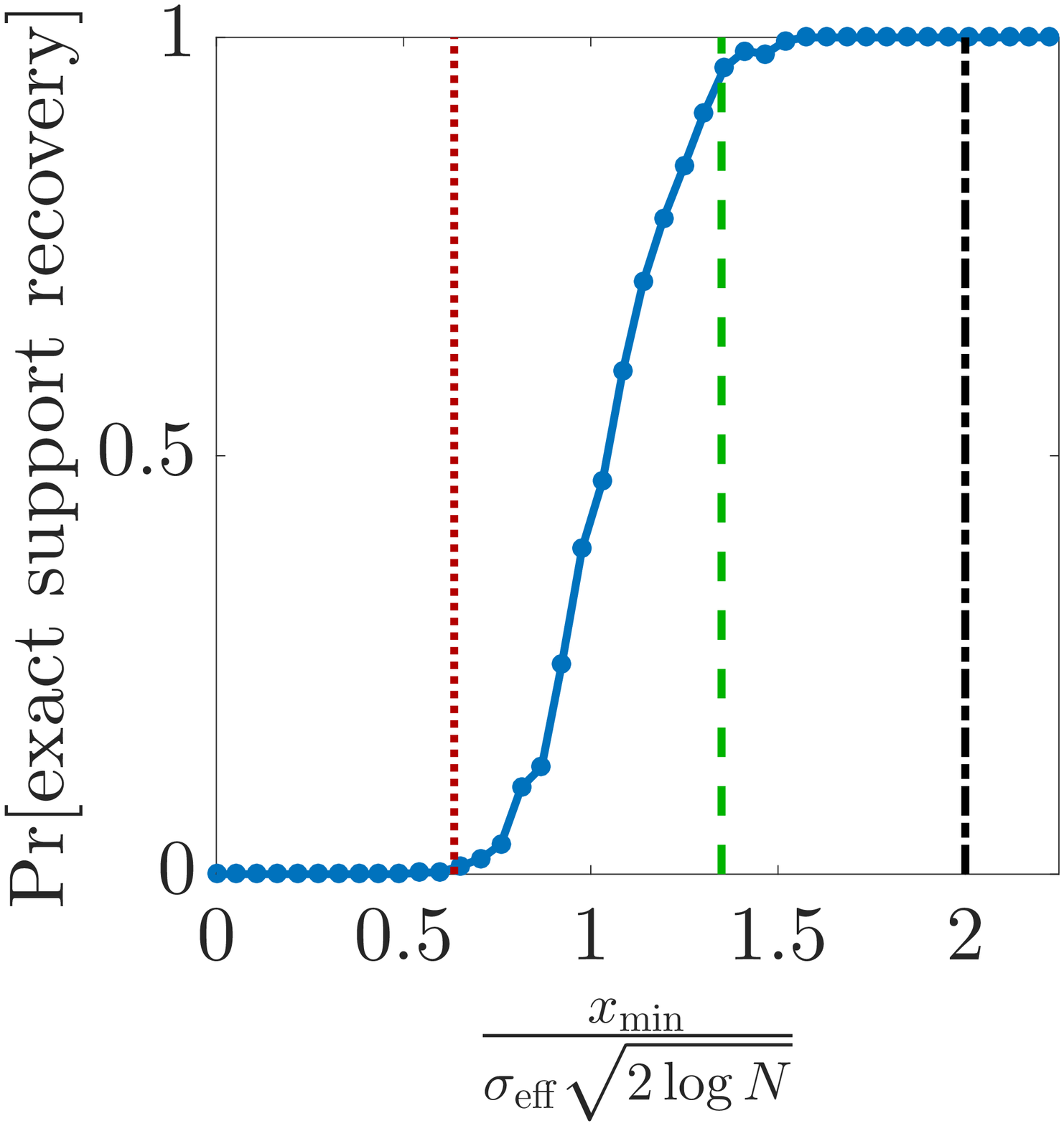}
		\caption{Two-ortho dictionary}
	\end{subfigure}
	\begin{subfigure}[b]{0.3\linewidth}
		\includegraphics[width=\linewidth]{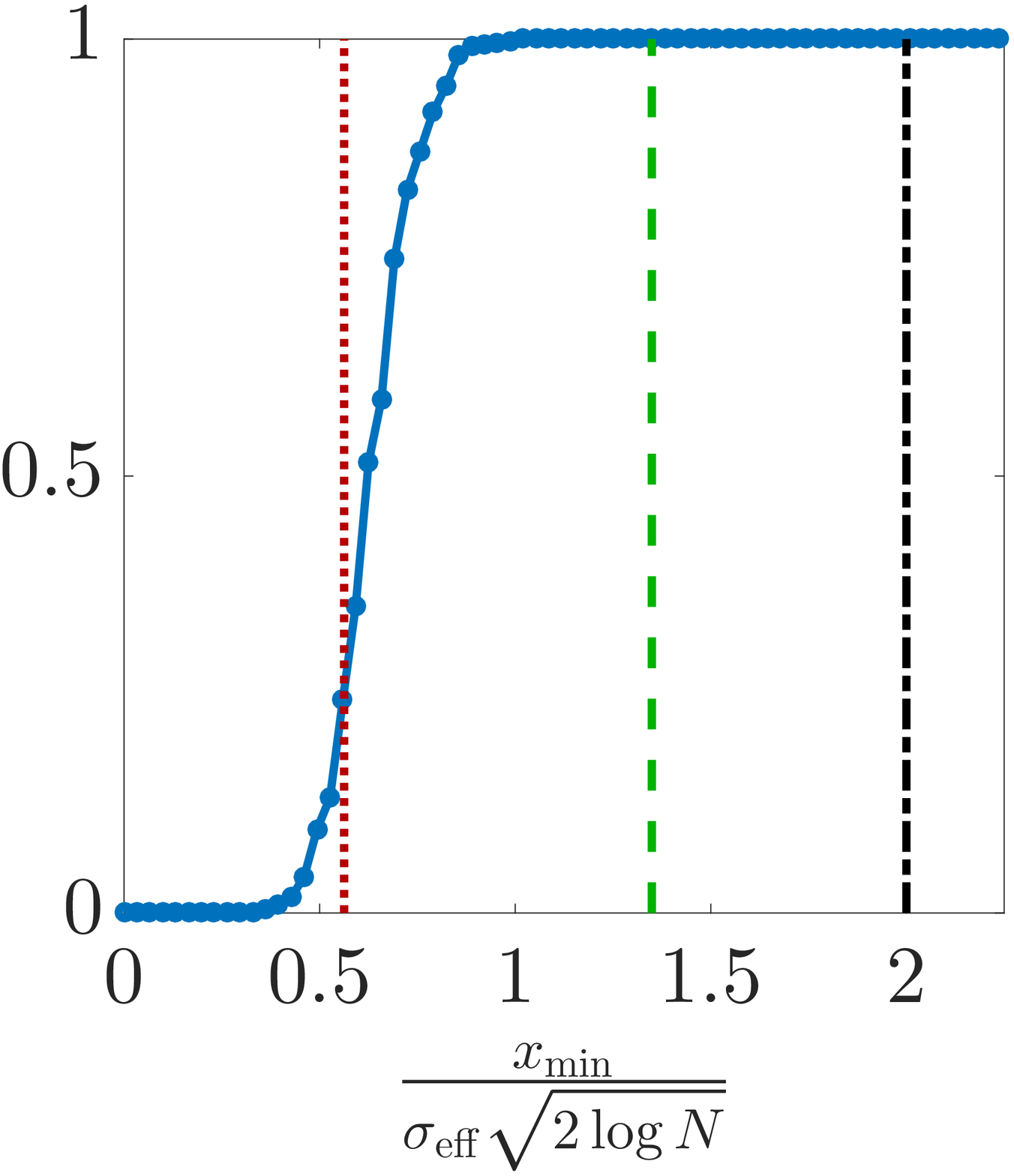}
		\caption{Random dictionary}
	\end{subfigure}
	\begin{subfigure}[b]{0.3\linewidth}
		\includegraphics[width=\linewidth]{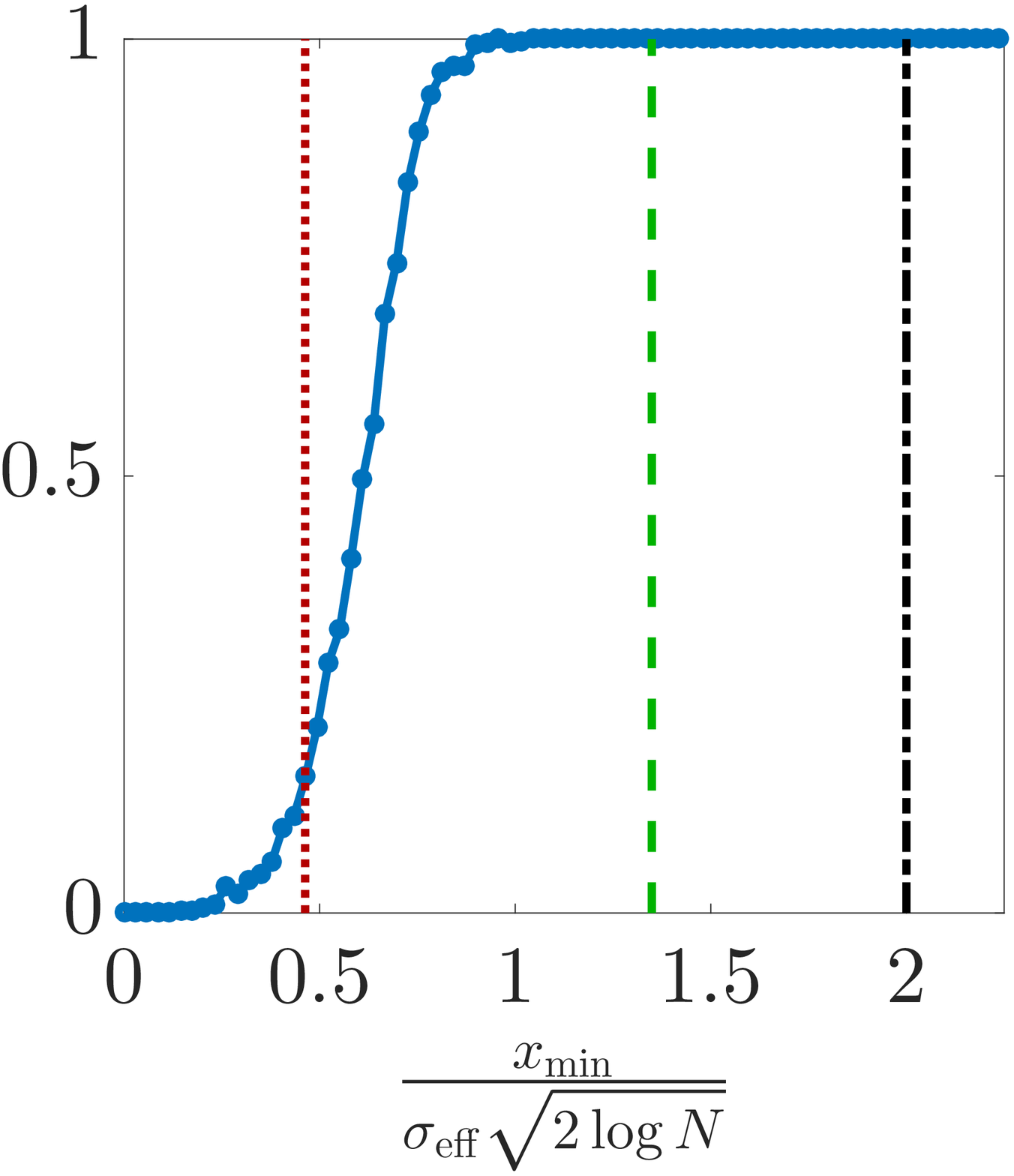}
		\caption{Our dictionary}
	\end{subfigure}
	\caption{The solid blue line in each panel is the empirical probability of exact support recovery of a sparse vector by OMP as a function of its normalized signal-to-noise ratio in Setting 1. The dash-dotted black line is the sufficient condition \eqref{eq:SNR_upper} by \citet{ben2010coherence}. The dashed green line is the sharper sufficient condition \eqref{eq:SNR_upper_sharp}. The dotted red line is the approximate condition \eqref{eq:apprx_lb}. 
	}
	\label{fig:differnt_dict}
\end{figure}
\begin{figure}[t]
	\centering
	\begin{subfigure}[b]{0.33\linewidth}
		\includegraphics[width=\linewidth]{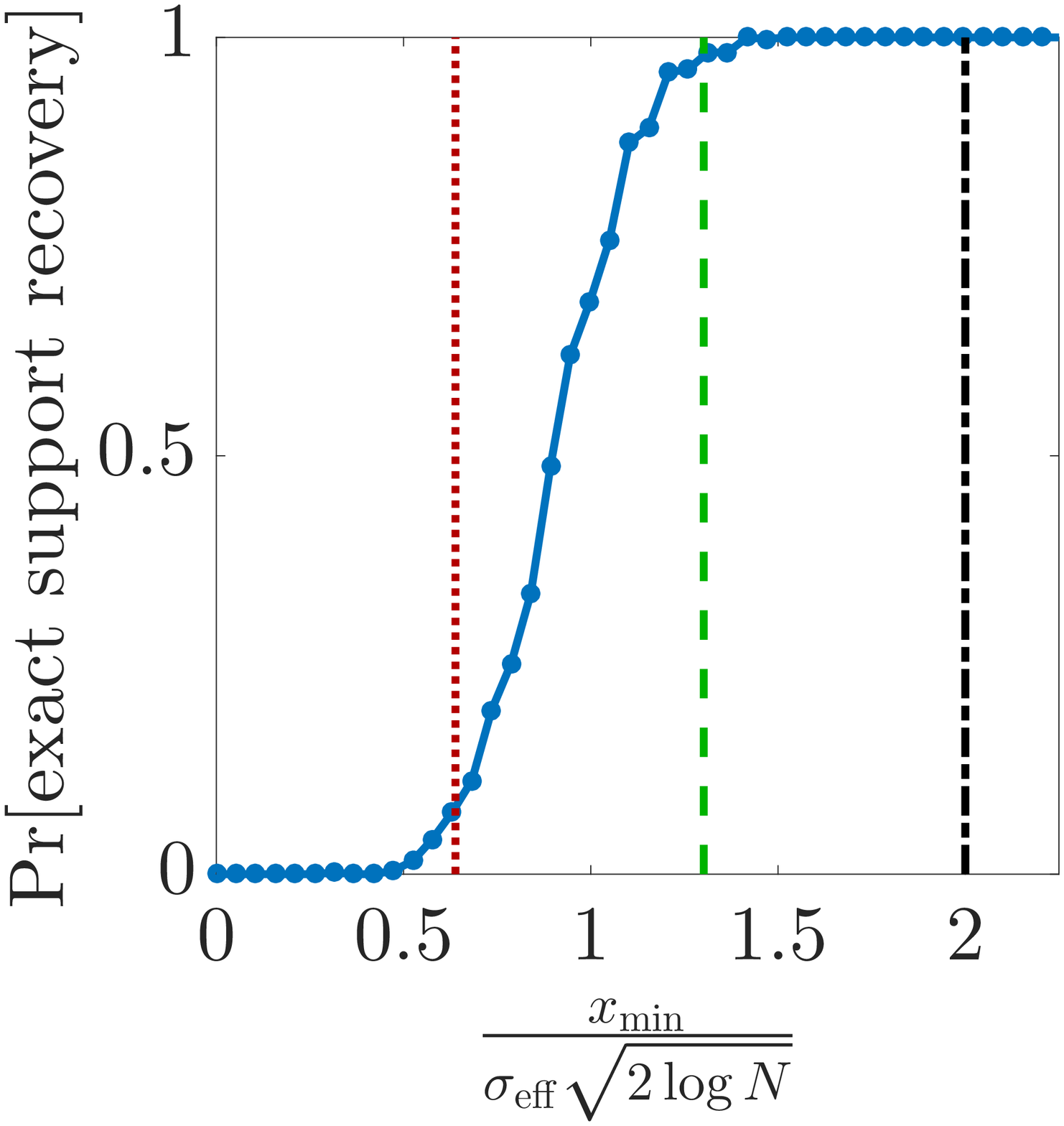}
		\caption{Sparsity $\slev=2$}
	\end{subfigure}
	\begin{subfigure}[b]{0.3\linewidth}
		\includegraphics[width=\linewidth]{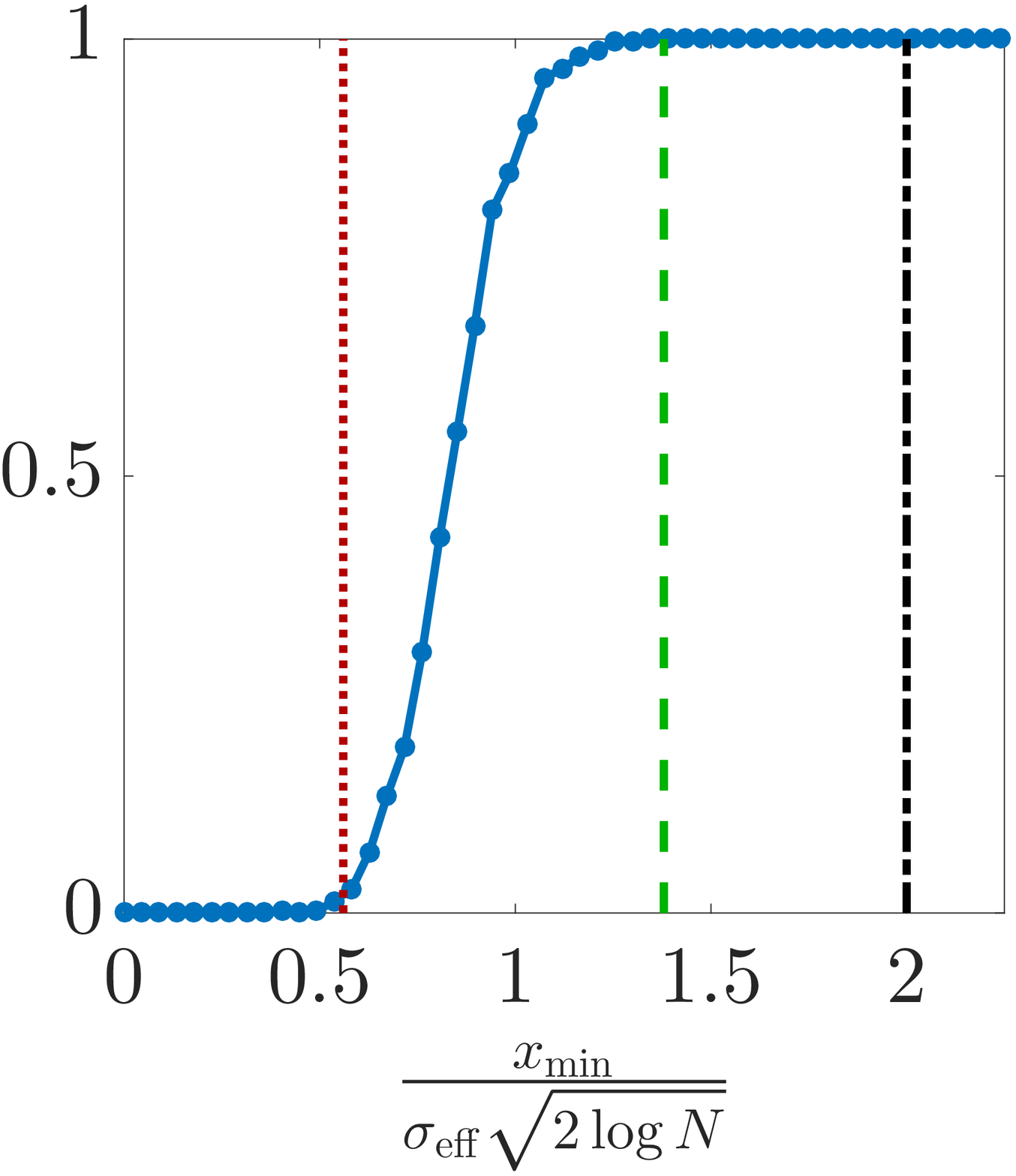}
		\caption{Sparsity $\slev=3$}
	\end{subfigure}
	\begin{subfigure}[b]{0.3\linewidth}
		\includegraphics[width=\linewidth]{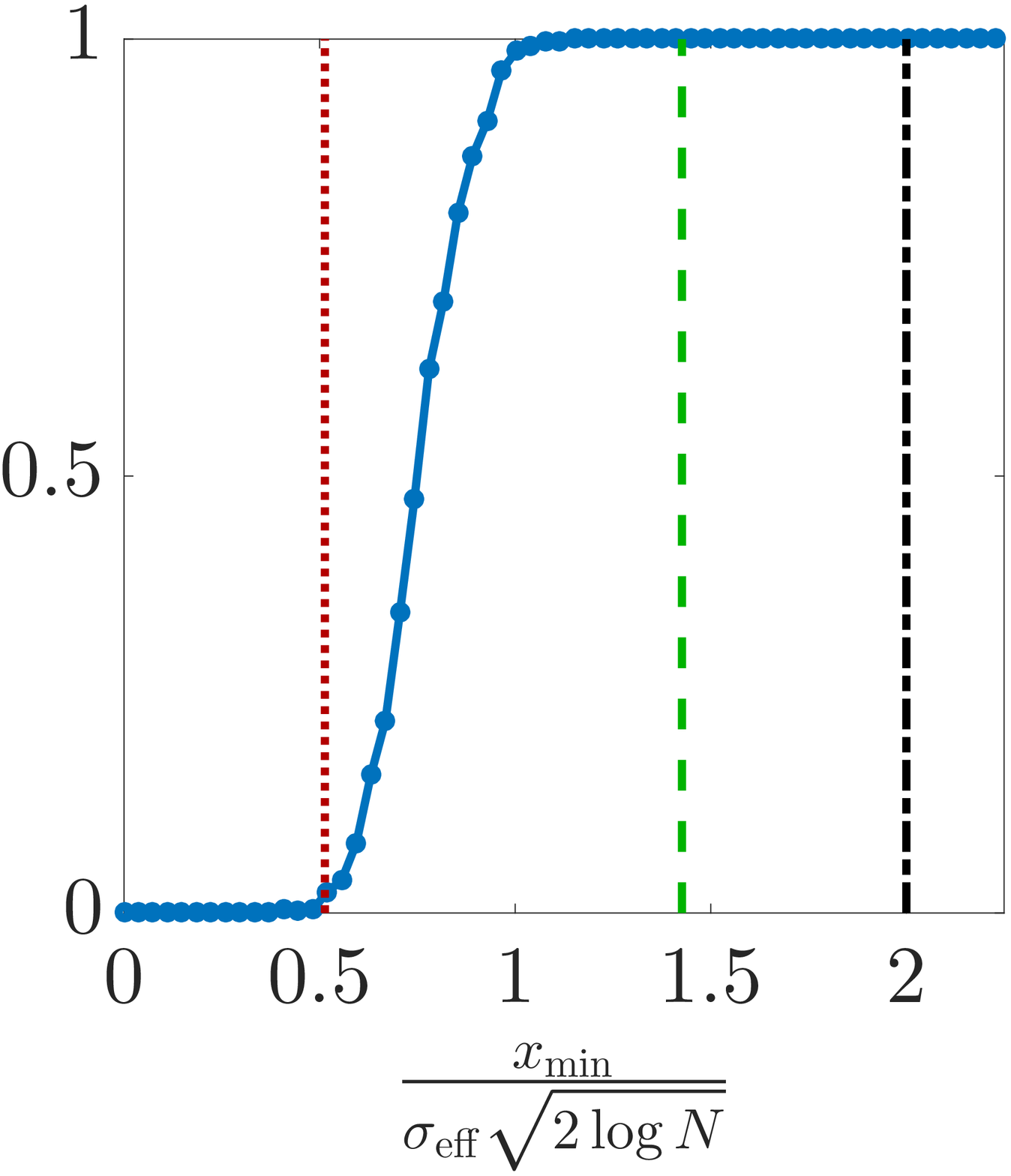}
		\caption{Sparsity $\slev=4$}
	\end{subfigure}
	\caption{The solid blue line in each figure is the empirical probability of exact support recovery of a sparse vector by OMP as a function of its normalized signal-to-noise ratio in Setting 2. The dash-dotted black line is the sufficient condition \eqref{eq:SNR_upper} by \citet{ben2010coherence}. The dashed green line is the sharper sufficient condition \eqref{eq:SNR_upper_sharp}. The dotted red line is the approximate condition \eqref{eq:apprx_lb}.
	}
	\label{fig:our_dict}
\end{figure}
We present several simulations to illustrate our sharper sufficient condition in Theorem \ref{thm:sharp_BH} and our near-tightness result in Theorem \ref{thm:LB}.
We generated $\dim\times\atnum$ dictionaries and $\slev$-sparse vectors with coefficients of equal magnitude $\nu$. For each vector $\coefv$, we drew random noise $\err\sim N\left(\bm 0,\bm I_{\dim}\right)$ with noise level $\sigma=1$ and computed the signal $\sig$ as in Eq. \eqref{eq:sig_dec}.

In Setting 1, we considered the probability of exact support recovery of $\slev$-sparse vectors with sparsity $\slev=3$ using three dictionaries of size $\left(\dim, \atnum\right) =\left( 4096,8192\right) $.
The first is a two-ortho dictionary $\dictm_1=\left[\bm{I} \: \bm{H} \right] $ composed of two orthogonal matrices -- the identity matrix and the Hadamard matrix with normalized columns. The second $\dictm_2$ is a dictionary whose atoms are drawn independently and uniformly at random from the unit sphere. For these two dictionaries the $\slev$-sparse vectors were drawn independently and uniformly at random from the $\binom{\atnum}{\slev}$ possible vectors. The third dictionary $\dictm_3$ and its corresponding $\slev$-sparse vector are the ones used to construct the near-tightness example in the proof of Theorem \ref{thm:LB} (see Eqs. \eqref{eq:dict} and \eqref{eq:lb_vector}).
Figure \ref{fig:differnt_dict} depicts the empirical probability that OMP recovered the exact support of the unknown sparse vector in Setting 1, averaged over $500$ realizations. It is interesting to note that our sufficient condition in Theorem \ref{thm:sharp_BH} indeed improves over that of \citet{ben2010coherence}. In addition, our sufficient condition is relatively sharp for small values of $\slev$. 
Another important observation is that even though condition \eqref{eq:apprx_lb} was derived considering $\dictm_3$ corresponding to the third panel, we see that the condition holds for different types of dictionaries as well.

In Setting 2, we study the probability of exact support recovery for different sparsity levels $\slev=2,3,4$ for the specific dictionary and $\slev$-sparse vector used in the proof of Theorem \ref{thm:LB}.  We constructed our dictionary of size $\left( \dim,\atnum\right) =\left( 1024,2048\right) $ with coherence $\coh=0.06$ using an alternating projection method by \citet{tropp2005designing}.
Figure \ref{fig:our_dict} depicts the empirical probability that OMP recovered the exact support of the unknown sparse vector in Setting 2, averaged over $500$ realizations.
Note that condition \eqref{eq:apprx_lb} is conservative since in our proof we analyze failure only in the first step of the algorithm. 
However, it cannot be increased much further, since the probability of recovery increases sharply at higher values of the normalized signal-to-noise ratio.
Finally, we remark that similar results are obtained for other values of $\dim,\atnum,\slev$ and $\coh$.
\section{Proofs}\label{sec:proofs}
The following auxiliary lemma will be useful in proving both Theorems \ref{thm:sharp_BH} and \ref{thm:LB}. Its proof appears in Section \ref{sec:proof_lems}.
\begin{lemma}\label{lem:abs_dep_bound}
	Let $\left(X_{1},\dots,X_{n_1}\right)\sim \atnum\left(\bm{0},\bm{\Sigma}\right)$ where $\bm{\Sigma}_{ii}=1$ for all $i\in[n_1]$. For any $\eta>0$ and $n_2\geq n_1$ the following holds
	\begin{equation}\label{eq:abs_dep_bound}
	\Pr\left[ \max_{i\in [n_1]} \abs{X_i} <\sqrt{2\eta\log n_2} \right] \geq 1-\frac{n_1}{n_2^{\eta}\sqrt{\pi\eta\log n_2}}.
	\end{equation}
\end{lemma}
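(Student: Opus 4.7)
The plan is to prove the bound by a straightforward combination of a Gaussian tail estimate (Mills' ratio) with a union bound. The key observation is that although the joint distribution of $(X_1,\dots,X_{n_1})$ has an arbitrary covariance $\bm{\Sigma}$, the assumption $\bm{\Sigma}_{ii}=1$ forces every marginal $X_i$ to be standard normal, and a union bound ignores the dependence entirely. Thus the correlation structure plays no role; only the marginals matter.

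Concretely, I would first recall the standard upper Mills bound: for $Z\sim N(0,1)$ and any $t>0$,
\[
\Pr[\,|Z|>t\,]\;\leq\;\sqrt{\tfrac{2}{\pi}}\,\frac{1}{t}\,e^{-t^2/2}.
\]
Specializing this to $t=\sqrt{2\eta\log n_2}$, the exponent becomes $e^{-\eta\log n_2}=n_2^{-\eta}$, and the prefactor simplifies as
\[
\sqrt{\tfrac{2}{\pi}}\cdot\frac{1}{\sqrt{2\eta\log n_2}}\;=\;\frac{1}{\sqrt{\pi\eta\log n_2}}.
\]
Therefore, for each fixed $i\in[n_1]$ we obtain
\[
\Pr\!\left[|X_i|\geq\sqrt{2\eta\log n_2}\right]\;\leq\;\frac{1}{n_2^{\eta}\sqrt{\pi\eta\log n_2}}.
\]

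Next I would apply the union bound over $i\in[n_1]$ to deduce
\[
\Pr\!\left[\max_{i\in[n_1]}|X_i|\geq\sqrt{2\eta\log n_2}\right]\;\leq\;\frac{n_1}{n_2^{\eta}\sqrt{\pi\eta\log n_2}},
\]
and taking complements yields exactly \eqref{eq:abs_dep_bound}. The hypothesis $n_2\geq n_1$ is not needed in any step of the derivation itself; it only serves to ensure that the stated lower bound on the probability is meaningful (nontrivially positive) in the regime where the lemma will be applied.

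There is no real obstacle here: the single subtlety worth double-checking is the exact numerical constant in the Mills bound, since the claimed bound matches the version with the tight prefactor $\sqrt{2/\pi}/t$ rather than a looser $e^{-t^2/2}$ alone, and getting the $\sqrt{\pi\eta\log n_2}$ denominator requires using that sharp form of the Gaussian tail inequality.
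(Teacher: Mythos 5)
Your proof is correct, but it takes a genuinely different route from the paper. The paper invokes \v{S}id\'ak's inequality to bound $\Pr\left[\max_i |X_i| < t\right]$ from below by the product $\prod_i \Pr\left[|X_i|\leq t\right] = \left(1-2Q(t)\right)^{n_1}$, then applies the same sharp Gaussian tail bound you use, and finally weakens the product via Bernoulli's inequality $(1-a)^{n_1}\geq 1-an_1$. You instead apply a union bound to the complementary event, which lands on exactly the same final expression because the paper's last step discards precisely the advantage that \v{S}id\'ak provides. Your argument is more elementary -- it needs nothing beyond the marginals being standard normal, so it would hold for any joint distribution with $N(0,1)$ marginals, whereas \v{S}id\'ak requires joint Gaussianity -- and your observations that the dependence structure is irrelevant, that the sharp prefactor $\sqrt{2/\pi}/t$ in the Mills bound is what produces the $\sqrt{\pi\eta\log n_2}$ denominator, and that $n_2\geq n_1$ is not used in the derivation are all accurate. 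What the paper's route buys is the strictly stronger intermediate bound $\left(1-\frac{1}{n_2^{\eta}\sqrt{\pi\eta\log n_2}}\right)^{n_1}$, which is tighter when $n_1$ is large; since the lemma only states the linearized form, nothing is lost by your approach here.
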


\subsection{Proof of Theorem \ref{thm:sharp_BH}}\label{sec:proof_thm_sharp_BH}
The proof is based on a tighter analysis of the proof of \citep[Thm 4]{ben2010coherence}. 
First, we define "bad" random events $B_{\slev}$ and $B_{ \tilde{\atnum}}$ which indicate that the largest magnitude of inner products of the noise with support atoms and with non-support atoms is larger than their respective thresholds. We then define the "good" random event $G$ that indicates that neither $B_{\slev}$ nor $B_{ \tilde{\atnum}}$ occurs, and prove that the event $G$ occurs with probability \eqref{eq:sharp_pr}. Next, we show that under the event $G$, OMP with $\slev$ iterations successfully recovers the support of $\coefv$. Finally, we prove that OMP* with threshold $\tau=\sigma \sqrt{2(1+\alpha)\log \atnum}$ stops after exactly $\slev$ iterations, and therefore also successfully recovers the support of $\coefv$.

In details, we define the following two random events \[B_{\slev}=\left\{ \max_{k\in\Omega}\left|\left\langle \atom_{k},\sigma\err\right\rangle \right|\geq\sqrt{\beta}\tau\right\}\] and \[B_{ \tilde{\atnum}}=\left\{ \max_{i\notin\Omega}\left|\left\langle \atom_{i},\sigma\err\right\rangle \right|\geq \tau\right\},\]  
and let the random event $G=\left( B_{\slev}\cup B_{ \tilde{\atnum}}\right) ^C$ be the complement of their union.
Note that while these definitions depend on the unknown support set $\indset$, this is only for the sake of the analysis, and we do not assume that OMP receives the support $\indset$ as input.

Next, we prove that the event $G$ occurs with probability at least \eqref{eq:sharp_pr}.
Since the dictionary atoms are normalized, each random variable $\left\langle \atom_{i},\err\right\rangle $ is a standard Gaussian random variable. Therefore, applying Lemma \ref{lem:abs_dep_bound} with $n_1= \slev$, $n_2=\atnum$ and $\eta=\left(1+\alpha\right)\beta$ gives \[\Pr\left[B_{\slev}\right]\leq \frac{\slev}{\atnum^{\left(1+\alpha\right)\beta}\sqrt{\pi\left(1+\alpha\right)\beta\log \atnum}}.\]
Since  $\slev\leq \atnum^\beta$, then
\[\Pr\left[B_{\slev}\right]\leq \frac{1}{\atnum^{\alpha\beta}\sqrt{\pi\left(1+\alpha\right)\beta\log \atnum}}.\]
Similarly, we can apply Lemma \ref{lem:abs_dep_bound} again with $n_1= \tilde{\atnum}=\atnum-\slev$, $n_2=\atnum$ and $\eta=1+\alpha$ and get \[\Pr\left[B_{\tilde{\atnum}}\right]\leq \frac{\tilde{\atnum}}{\atnum^{\left(1+\alpha\right)}\sqrt{\pi\left(1+\alpha\right)\log \atnum}}.\]
Since $\tilde{\atnum} < \atnum$, then
\[\Pr\left[B_{\tilde{\atnum}}\right]\leq\frac{1}{\atnum^{\alpha}\sqrt{\pi\left(1+\alpha\right)\log \atnum}}.\] 
By the definition of $G$ and a union bound, \[\Pr\left[G\right]=\Pr\left[\left(B_{\slev}\cup B_{\tilde{\atnum}}\right)^{c}\right]\geq1-\Pr\left[B_{\slev}\right]-\Pr\left[B_{\tilde{\atnum}}\right],\]
which proves that the event $G$ occurs with probability at least \eqref{eq:sharp_pr}.

The following lemma shows that under condition \eqref{eq:SNR_upper_sharp}, one step of the OMP algorithm chooses an atom in the support $\indset$. 
\begin{lemma}\label{lem:sharp_induc_gur}
	Let $\bm{z}$ be an unknown vector with sparsity $\slev$ and support $\indset=\supp \left\lbrace \bm{z}\right\rbrace $, and let $\res = \dictm\bm{z}+\sigma\err$ where $\dictm\in \mathbb{R}^{\dim \times \atnum} $ is a dictionary with normalized columns and coherence $\coh$, and $\err\sim N\left(\bm 0,\bm I_{\dim}\right)$. Suppose that the MIP condition \eqref{eq:mip_cond} holds, that $\slev\leq\atnum^\beta$ for some $0< \beta < 1$ and that for some $\alpha\geq 0$
	\begin{equation}\label{eq:SNR_upper_sharp_for_lemma}
	\max_{i\in\indset}\abs{\bm{z}_i}\geq \sigeff\left(1+\sqrt{\beta}\right)\sqrt{2\left(1+\alpha\right)\log\atnum}.
	\end{equation}
	Then under the event $G$,
	\begin{equation}\label{eq:indu_cond}
	\max_{k\in\indset}\left|\left\langle \atom_{k},\res\right\rangle \right|>\max_{i\notin\indset}\left|\left\langle \atom_{i},\res\right\rangle \right|.
	\end{equation}
\end{lemma}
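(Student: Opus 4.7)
The plan is to bound $\max_{k\in\indset}|\langle \atom_k,\res\rangle|$ from below and $\max_{i\notin\indset}|\langle \atom_i,\res\rangle|$ from above separately, using only the coherence bound on cross-inner-products and the two noise thresholds built into the event $G$, and then to show that the gap between these two bounds is positive precisely when the SNR condition \eqref{eq:SNR_upper_sharp_for_lemma} holds. Let $\tau=\sigma\sqrt{2(1+\alpha)\log\atnum}$, pick $k^{*}\in\indset$ attaining $z^{*}:=\max_{i\in\indset}|\bm z_i|$, and split $\res=\dictm\bm z+\sigma\err$ into a signal part and a noise part in every inner product that appears.

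For the lower bound, expand
\[
\langle \atom_{k^{*}},\res\rangle=\bm z_{k^{*}}+\sum_{j\in\indset\setminus\{k^{*}\}}\bm z_j\,\langle\atom_{k^{*}},\atom_j\rangle+\sigma\,\langle\atom_{k^{*}},\err\rangle,
\]
using $\|\atom_{k^{*}}\|=1$. The reverse triangle inequality, the coherence bound $|\langle\atom_{k^{*}},\atom_j\rangle|\le\coh$, the inequality $|\bm z_j|\le z^{*}$, and the fact that under $G$ the complement of $B_{\slev}$ gives $|\sigma\langle\atom_{k^{*}},\err\rangle|<\sqrt{\beta}\tau$, together yield
\[
\max_{k\in\indset}|\langle\atom_k,\res\rangle|\;\ge\;|\langle\atom_{k^{*}},\res\rangle|\;\ge\;\bigl(1-(\slev-1)\coh\bigr)z^{*}-\sqrt{\beta}\,\tau.
\]
For the upper bound, for any $i\notin\indset$ expand
\[
\langle\atom_i,\res\rangle=\sum_{j\in\indset}\bm z_j\,\langle\atom_i,\atom_j\rangle+\sigma\,\langle\atom_i,\err\rangle
\]
and apply the triangle inequality, $|\langle\atom_i,\atom_j\rangle|\le\coh$, and the complement of $B_{\tilde\atnum}$ from $G$, which gives $|\sigma\langle\atom_i,\err\rangle|<\tau$. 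This produces
\[
\max_{i\notin\indset}|\langle\atom_i,\res\rangle|\;\le\;\slev\coh\, z^{*}+\tau.
\]

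Subtracting, the desired strict inequality \eqref{eq:indu_cond} holds whenever
\[
\bigl(1-(2\slev-1)\coh\bigr)\,z^{*}\;>\;(1+\sqrt{\beta})\,\tau,
\]
and the MIP assumption guarantees that the prefactor on the left is positive. Dividing through by $1-(2\slev-1)\coh$ and recalling $\sigeff=\sigma/\bigl(1-(2\slev-1)\coh\bigr)$ turns this into
\[
z^{*}\;>\;\sigeff\,(1+\sqrt{\beta})\sqrt{2(1+\alpha)\log\atnum},
\]
which is exactly hypothesis \eqref{eq:SNR_upper_sharp_for_lemma} on $\max_{i\in\indset}|\bm z_i|=z^{*}$. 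That closes the proof.

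There is no real obstacle here; the computation is a direct chain of triangle and coherence inequalities. The only conceptual point worth flagging is the asymmetric choice of thresholds in $G$: using $\sqrt{\beta}\,\tau$ for the $\slev$ inner products with support atoms (since $\slev\le\atnum^{\beta}$ lets us tighten the Gaussian maximum bound) while keeping the larger threshold $\tau$ for the $\tilde\atnum$ non-support atoms. This asymmetry is exactly what replaces the factor $2$ of \citet{ben2010coherence} by the sharper $1+\sqrt{\beta}$, so the choice of $G$ must be made up front with the final arithmetic in mind.
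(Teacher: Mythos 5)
Your proof is correct and follows essentially the same route as the paper's: the same decomposition of each inner product into signal, cross-coherence, and noise terms, the same asymmetric use of the thresholds $\sqrt{\beta}\tau$ and $\tau$ from the event $G$, and the same final arithmetic reducing everything to $\left(1-(2\slev-1)\coh\right)z^{*}$ versus $(1+\sqrt{\beta})\tau$. The only bookkeeping difference is that the paper keeps the noise bounds strict (which is what the definition of $G$ actually provides), so the non-strict hypothesis \eqref{eq:SNR_upper_sharp_for_lemma} still yields the strict conclusion \eqref{eq:indu_cond}; your version, written with $\geq$ and $\leq$ in the intermediate bounds, would nominally require strict inequality in the SNR condition, a purely cosmetic gap.
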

\begin{proof}[Proof of Lemma \ref{lem:sharp_induc_gur}]
	Denote by $z_{\max}=\max_{i\in\indset}\abs{\bm{z} _i}$.
	Under the event $G$, the largest magnitude of an inner product of the observed signal $\res$ with a non-support atom $i\notin \indset$ is
	\begin{eqnarray}\label{eq:M_out}
	\max_{i\notin\indset}\left|\left\langle \atom_{i},\res\right\rangle \right| & = & \max_{i\notin\indset}\left|\left\langle \atom_{i},\sigma\err\right\rangle +\sum_{j\in\indset}\bm{z}_{j}\left\langle \atom_{i},\atom_{j}\right\rangle \right|\nonumber\\
	& \leq & \max_{i\notin\indset}\left|\left\langle \atom_{i},\sigma\err\right\rangle \right|+\max_{i\notin\indset}\sum_{j\in\indset}\left|\bm{z}_{j}\left\langle \atom_{i},\atom_{j}\right\rangle \right|\nonumber\\
	& < & \tau+\slev\coh z_{\max}.
	\end{eqnarray}
	Similarly, 
	\begin{eqnarray}\label{eq:M_in}
	\max_{k\in\indset}\left|\left\langle \atom_{k},\res\right\rangle \right| & = & \max_{k\in\indset}\left|\bm{z}_{k}+\left\langle \atom_{k},\sigma\err\right\rangle +\sum_{j\in\indset\setminus\left\{ k\right\} }\bm{z}_{j}\left\langle \atom_{k},\atom_{j}\right\rangle \right|\nonumber\\
	& \geq & z_{\max}-\max_{k\in\indset}\left|\left\langle \atom_{k},\sigma\err\right\rangle +\sum_{j\in\indset\setminus\left\{ k\right\} }\bm{z}_{j}\left\langle \atom_{k},\atom_{j}\right\rangle \right|\nonumber\\
	& > & z_{\max}-\sqrt{\beta}\tau-\left(\slev-1\right)\coh z_{\max}.
	\end{eqnarray}
	Combining the last two equations gives \[\max_{k\in\indset}\left|\left\langle \atom_{k},\res\right\rangle \right|-\max_{i\notin\indset}\left|\left\langle \atom_{i},\res\right\rangle \right|>z_{\max}-\left(2m-1\right)\coh z_{\max}-\sqrt{\beta}\tau-\tau.\] 
	Substituting for $\tau$ implies that Eq. \eqref{eq:indu_cond} holds under condition \eqref{eq:SNR_upper_sharp_for_lemma}.
\end{proof}

Next, assume that $G$ occurs. We prove the first part of Theorem \ref{thm:sharp_BH} by induction. Consider the first iteration of OMP, described in Algorithm \ref{alg:OMP}. In line 3, OMP chooses an atom $\atom_i$ whose inner product with $\sig$ is maximal. In other words, condition \eqref{eq:indu_cond} must hold for $\res=\sig$ and $\bm{z}=\coefv$ for OMP to select an atom $i\in\indset$ at the first iteration. When $G$ occurs, then by condition \eqref{eq:SNR_upper_sharp} and by Lemma \ref{lem:sharp_induc_gur} OMP selects a support atom, i.e., $\hat{\indset}_1 \subseteq \indset$. Assume by induction that the set of atoms that were selected in all previous $1\leq t<\slev$ iterations is a subset of the support set, i.e., $\supp\left\lbrace \hat{\coefv}_{t}\right\rbrace=\hat{\indset}_{t} \subseteq \indset$. Hence,
\begin{equation}\label{eq:r_t}
\res_{t}=\sig-\dictm\hat{\coefv}_{t}=\dictm\left(\coefv-\hat{\coefv}_{t}\right)+\sigma\err,
\end{equation} where $\coefv-\hat{\coefv}_{t}$ is a sparse vector whose support is contained in $\indset$. 
In addition, since OMP selects exactly one atom at each iteration, \[\left|\supp\left\{ \hat{\coefv}_{t}\right\} \right|=t<\slev=\left|\supp\left\{ \coefv\right\} \right|.\] Hence, at least one entry in $\coefv-\hat{\coefv}_{t}$ is equal to its corresponding entry in $\coefv$ and
\begin{equation}\label{eq:res_min}
\max_{i\in\indset}\abs{\left(\coefv-\hat{\coefv}_{t}\right)_{i}}\geq\min_{i\in\indset}\abs{\coefv_{i}}=\xmin.
\end{equation} 
Since by Eq. \eqref{eq:SNR_upper_sharp} $\xmin$ is larger than the bound in Eq. \eqref{eq:SNR_upper_sharp_for_lemma}, we can apply Lemma \ref{lem:sharp_induc_gur} with $\res=\res_{t}$ and $\bm{z}=\coefv-\hat{\coefv}_{t}$ to conclude that under the event $G$, \[\max_{k\in\indset}\left|\left\langle \atom_{k},\res_{t}\right\rangle \right|>\max_{i\notin\indset}\left|\left\langle \atom_{i},\res_{t}\right\rangle \right|.\]
This implies that OMP chooses a support atom at iteration $t+1$. Therefore by induction the OMP algorithm recovers the unknown support of $\coefv$ under the event $G$, which concludes the proof of the first part of Theorem \ref{thm:sharp_BH}.

It remains to show that the OMP* algorithm with threshold $\tau=\sigma \sqrt{2(1+\alpha)\log \atnum}$ does not stop early before the $\slev$-th iteration, and that it does stop after the $\slev$-th iteration.
At iteration $1\leq t\leq \slev$,
\[\norm{\dictm^T\res_{t}}_{\infty} =\max_{i\in[\atnum]}\lvert\langle\atom_{i},\res_{t}\rangle\rvert \geq \max_{k\in\indset}\lvert\langle\atom_{k},\res_{t}\rangle\rvert.\]
Under the event $G$, by Eqs. \eqref{eq:M_in} and \eqref{eq:res_min}, \[\max_{k\in\indset}\lvert\langle\atom_{k},\res_{t}\rangle\rvert>\xmin\left(1-\left(\slev-1\right)\coh\right)-\sqrt{\beta}\tau. \]
Finally, by condition \eqref{eq:SNR_upper_sharp}, 
\[\norm{\dictm^T\res_{t}}_{\infty}>\frac{\left(1+\sqrt{\beta}\right)\left(1-\left(\slev-1\right)\coh\right)}{1-\left(2\slev-1\right)\coh}\tau-\sqrt{\beta}\tau=\frac{1-\left(\left(1-\sqrt{\beta}\right)\slev-1\right)\coh}{1-\left(2\slev-1\right)\coh}\tau>\tau,\]
which proves that OMP* does not stop early. 

At the end of iteration $t=\slev$ all support atoms have been selected. 
Let $\coefv_{\indset}\in\R^\slev $ and $\dictm_{\indset}\in\R^{\dim\times\slev}$ be the vector $\coefv$ and the dictionary $\dictm$ restricted to the support $\indset$ (respectively), and let $P_{\indset}=\dictm_{\indset}\dictm_{\indset}^{\dagger}=\dictm_{\indset}\left(\dictm_{\indset}^{T}\dictm_{\indset}\right)^{-1}\dictm_{\indset}^{T}$ be the projection of the observed signal onto the linear subspace spanned by the elements of $\indset$. Then \[\res_{\slev}=\sig-\dictm_{\indset}\dictm_{\indset}^{\dagger}\sig=\left(I-P_{\indset}\right)\sig=\left(I-P_{\indset}\right)\dictm_{\indset}\coefv_{\indset}+\left(I-P_{\indset}\right)\sigma\err.\]
Since $I-P_{\indset}$ is a projection to the linear space that is orthogonal to the subspace spanned by the elements of $\indset$, the first term of the last equation above is zero. Hence, under the event $G$,
\[\norm{\dictm^T\res_{\slev}}_{\infty}=\max_{i\in\left[ \atnum\right] }\lvert\langle\atom_{i},\res_{\slev}\rangle\rvert=\max_{i\in\left[ \atnum\right]}\lvert\langle\atom_{i},\left(I-P_{\indset}\right)\sigma\err\rangle\rvert\leq \max_{i\in\left[ \atnum\right]}\lvert\langle\atom_{i},\sigma\err\rangle\rvert \leq \tau.\]
Therefore, OMP* stops after exactly $\slev$ iterations. This concludes the proof of Theorem \ref{thm:sharp_BH}.
\qed

\subsection{Proof of Theorem \ref{thm:LB}}\label{sec:proof_thm_LB}
First, we present an outline of the proof. Given parameters $\atnum,\dim,\slev,\coh$ with $\dim<\atnum$, and where $\slev,\coh$ satisfy conditions \eqref{eq:sparsity_beta}-\eqref{eq:mu_cond}, we construct a dictionary $\dictm\in \R^{\dim \times \atnum} $ with coherence $\coh$ and a sparse vector $\coefv\in \R^{ \atnum}$ with sparsity $\slev$. We show that when the smallest coefficient in $\coefv$ is sufficiently small as in condition \eqref{eq:SNR_Cond}, then with probability at least $P_0$, OMP fails to detect a support atom already at the first iteration, and therefore fails to recover the support of $\coefv $.

To prove the theorem we shall use the following auxiliary lemmas.
The first lemma concerns the maximum of several correlated normal random variables.
\begin{lemma}  \label{lem:max_tail}
	Let $\left(X_{1},\dots,X_{n}\right)\sim \atnum\left(\bm{0},\bm{\Sigma}\right)$ where $\bm{\Sigma}_{ii}=1$ for all $i\in[n]$ and $ 0<\left| \bm{\Sigma}_{ij} \right| \leq \eta<1 $ for all $i\neq j\in[n]$. For $M_n = \max_{i\in [n]} X_i$, the following hold:
	\begin{enumerate}
		\item (\citep{lopes2018maximum})\textbf{.} There exists a universal constant $c_0>1$ such that 
		\begin{equation}\label{eq:lopes}
		\E\left[M_{n}\right]\geq\sqrt{2(1-\eta)\log n}-\left(c_{0}-1\right)\sqrt{\log\log n}.
		\end{equation}
		\item (\citep{tanguy2015some})\textbf{.} There exists $C>0$ such that for any $n\geq 2$ and $t>0$, 
		\begin{equation}\label{eq:tanguy}
		\Pr\left[\left|M_{n}-\E\left[M_{n}\right]\right|>t\right]\leq6e^{-Ct\sqrt{\min\left\lbrace \frac{1}{\eta},\;\log n\right\rbrace }}.
		\end{equation}
	\end{enumerate}
\end{lemma}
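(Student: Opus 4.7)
Both parts of Lemma \ref{lem:max_tail} are quoted from prior work (Lopes for the expectation lower bound, Tanguy for the concentration inequality), so my plan is to identify the relevant theorem in each reference, verify that our hypotheses match theirs, and invoke it. I also want to sketch the underlying intuition so the reader has something concrete in hand, because in both cases a short self-contained proof (or proof sketch) is available.

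For part 1, the cleanest route goes through Slepian's comparison lemma. Let $Y=(Y_{1},\ldots,Y_{n})$ be an exchangeable centered Gaussian vector with $\Var(Y_{i})=1$ and $\operatorname{Cov}(Y_{i},Y_{j})=\eta$ for all $i\neq j$. Since $\bm{\Sigma}_{ij}\leq|\bm{\Sigma}_{ij}|\leq\eta=\operatorname{Cov}(Y_{i},Y_{j})$ and the diagonals agree, Slepian gives $\E[M_{n}]\geq\E[\max_{i}Y_{i}]$. The exchangeable vector admits the representation $Y_{i}=\sqrt{\eta}\,Z+\sqrt{1-\eta}\,W_{i}$ with $Z,W_{1},\ldots,W_{n}$ i.i.d. $N(0,1)$, whence $\E[\max_{i}Y_{i}]=\sqrt{1-\eta}\,\E[\max_{i}W_{i}]$. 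Plugging in the classical asymptotic $\E[\max_{i}W_{i}]\geq\sqrt{2\log n}-O\!\left(\log\log n/\sqrt{\log n}\right)$ for iid standard normals, and absorbing the lower-order correction into a universal constant $c_{0}$, yields the stated bound.

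For part 2, the plan is to invoke Tanguy's concentration inequality directly. A basic Borell--TIS bound on $M_{n}$ (which is $1$-Lipschitz in the underlying Gaussian vector) would give only the sub-Gaussian tail $\exp(-t^{2}/2)$, which is too weak at the natural fluctuation scale $t\sim 1/\sqrt{\log n}$ around $\E[M_{n}]$. The stated sub-exponential rate with the interpolation factor $\sqrt{\min\{1/\eta,\log n\}}$ reflects the sharper bounds obtainable by entropy / modified log-Sobolev methods applied to maxima of Gaussians whose covariances are off-diagonally bounded; these methods exploit the fact that the gradient of $M_{n}$ is concentrated on the argmax coordinate, giving a much smaller effective variance proxy than Lipschitz arguments yield. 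I would then check that the covariance hypothesis ($\bm{\Sigma}_{ii}=1$, $|\bm{\Sigma}_{ij}|\leq\eta$) matches the form in which Tanguy's theorem is stated, and identify $C$ as the universal constant appearing there.

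The main obstacle is bookkeeping rather than conceptual: aligning the hypotheses and constants of the cited results with the precise wording used here. In particular, Tanguy may phrase the assumption in spectral terms, for example bounding the operator norm of the off-diagonal part of $\bm{\Sigma}$ or a related quantity, rather than as an entry-wise bound, in which case I would need the elementary observation that an entry-wise bound $|\bm{\Sigma}_{ij}|\leq\eta$ implies a spectral bound of the same order (up to logarithmic factors absorbed into $C$). A secondary sanity check is that in part 1 the $\sqrt{\log\log n}$ correction is indeed necessary and not the smaller $\log\log n/\sqrt{\log n}$ one would get from iid maxima alone, so that the constant $c_{0}>1$ suffices uniformly across the regime of $\eta$ considered.
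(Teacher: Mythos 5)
The paper gives no proof of this lemma at all---both parts are imported verbatim from the cited references of Lopes and Tanguy, exactly as you propose---so your plan matches the paper's treatment, and your Slepian-based sketch of part~1 is a correct elementary derivation (Slepian applies since $\bm{\Sigma}_{ij}\leq\eta$ with matching diagonals, and the resulting correction $\sqrt{1-\eta}\cdot O\left(\log\log n/\sqrt{\log n}\right)$ is indeed dominated by $(c_{0}-1)\sqrt{\log\log n}$). One caution: your fallback claim that an entry-wise bound $\left|\bm{\Sigma}_{ij}\right|\leq\eta$ implies a spectral bound of the same order up to logarithmic factors is false in general (taking all off-diagonal entries equal to $\eta$ gives operator norm of order $n\eta$), but this contingency never arises because Tanguy's hypothesis is stated as an entry-wise correlation bound, matching the lemma as written.
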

In constructing our specific dictionary, we will use the following lemma whose proof appears in Section \ref{sec:proof_lems}.
\begin{lemma}\label{lem:m_m_corr_mat}
	For any integer $\slev>1$ and any $\coh<\frac{1}{m-1}$, there exist vectors $\atome_{1},  \dots,  \atome_{\slev}\in \R^{\slev}$ such that for all $i,j\in[\slev]$ 
	\begin{equation}\label{eq:Gram}
	\left\langle \atome_{i},\atome_{j}\right\rangle =\begin{cases}
	1 & i=j\\
	-\coh & i\neq j.
	\end{cases}
	\end{equation}
\end{lemma}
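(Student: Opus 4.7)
The plan is to exhibit the vectors $\atome_1,\dots,\atome_\slev$ as the columns of a square matrix $\Y\in\R^{\slev\times\slev}$ whose Gram matrix $\Y^T\Y$ equals the prescribed inner-product matrix. This reduces the problem to showing that the target Gram matrix is positive semidefinite under the stated hypothesis $\coh<1/(\slev-1)$.

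Concretely, I would define the target matrix $G\in\R^{\slev\times\slev}$ by $G_{ii}=1$ and $G_{ij}=-\coh$ for $i\neq j$. Writing $\mathbf{1}\in\R^{\slev}$ for the all-ones vector, this is compactly
\[
G \;=\; (1+\coh)\,I_{\slev} \;-\; \coh\,\mathbf{1}\mathbf{1}^T.
\]
Since $\mathbf{1}\mathbf{1}^T$ has eigenvalue $\slev$ on the one-dimensional eigenspace $\operatorname{span}(\mathbf{1})$ and eigenvalue $0$ on its orthogonal complement, $G$ has eigenvalue $(1+\coh)-\coh\slev=1-(\slev-1)\coh$ with multiplicity $1$ and eigenvalue $1+\coh$ with multiplicity $\slev-1$.

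The hypothesis $\coh<1/(\slev-1)$ makes the first eigenvalue strictly positive, and $1+\coh>0$ holds automatically in the relevant regime (coherence values are nonnegative, and more generally it suffices that $\coh>-1$). Hence $G$ is symmetric positive definite, and by the spectral theorem we may factor $G=\Y^T\Y$ for some invertible $\Y\in\R^{\slev\times\slev}$ (for instance, writing $G=U\Lam U^T$ with $U$ orthogonal and $\Lam$ positive diagonal, take $\Y=\Lam^{1/2}U^T$). Taking $\atome_i$ to be the $i$-th column of $\Y$, we obtain $\tinnerp{\atome_i}{\atome_j}=(\Y^T\Y)_{ij}=G_{ij}$, exactly matching \eqref{eq:Gram}.

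There is no real obstacle here: the only nontrivial input is the spectral calculation, which is immediate from the rank-one-plus-scalar-identity structure of $G$. The hypothesis $\coh<1/(\slev-1)$ is precisely what is needed, and is in fact necessary, since the eigenvalue $1-(\slev-1)\coh$ becomes nonpositive otherwise and no such Gram factorization can exist.
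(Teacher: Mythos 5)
Your proof is correct and follows essentially the same route as the paper: both write $G=(1+\coh)I-\coh\mathbf{1}\mathbf{1}^{T}$, deduce positive definiteness from $\coh<1/(\slev-1)$, and extract the vectors from a factorization $G=\Y^{T}\Y$ (your $\Y=\Lam^{1/2}U^{T}$ is exactly the paper's explicit construction with $\vv_{1}=\mathbf{1}/\sqrt{\slev}$). Your explicit eigenvalue computation is a slightly more transparent justification of the positive definiteness claim that the paper asserts with a citation.
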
     

\begin{proof}[Proof of Theorem \ref{thm:LB}]
	Recall the notations $\tilde{\dim}=\dim-\slev$ and  $\tilde{\atnum}=\atnum-\slev$. 
	Given the sparsity $\slev$ and coherence $\coh$, we first construct vectors $\atome_{1},\dots,\atome_{\slev}\in\R^\slev$ as in Lemma \ref{lem:m_m_corr_mat}.
	Next, we construct our dictionary $\dictm=\left[ \atom_1,\dots, \atom_N\right] \in \R^{\dim \times \atnum}$ as follows
	\begin{equation}\label{eq:dict}
	\dictm=\left[\begin{array}{cccccc}
	\atome_{1} & \dots & \atome_{\slev} & \sqrt{\tmu}\bar{\atome} & \dots & \sqrt{\tmu}\bar{\atome}\\
	\boldsymbol{0} & \dots & \boldsymbol{0} & \sqrt{1-\tmu}\rr_{\slev+1} & \dots & \sqrt{1-\tmu}\rr_{\atnum}
	\end{array}\right],
	\end{equation}
	where $\bar{\atome}=\frac{\sum_{i\in\left[\slev\right]}\atome_{i}}{\left\Vert \sum_{i\in\left[\slev\right]}\atome_{i}\right\Vert }$ and the constant $\tmu$ is defined in Eq. \eqref{eq:tmu}. For future use, note that 
	\begin{equation}\label{eq:bar_norm}
	\left\Vert \sum_{i\in\left[\slev\right]}\atome_{i}\right\Vert =\sqrt{\sum_{i\in\left[\slev\right]}\sum_{i'\in\left[\slev\right]}\innerp{\atome_{i}}{\atome_{i'}}}=\sqrt{\slev\left(1-\left(\slev-1\right)\coh\right)}.
	\end{equation}
	The key requirements of the vectors $\rr_{\slev+1},\dots,\rr_N\in\R^{\tilde{\dim}}$ is that they have unit norm $\norm{\rr_{i}}=1$ and that they satisfy the following condition
	\begin{equation}\label{eq:coh_cond}
	\max_{\slev+1\leq i<j\leq \atnum}\abs{\innerp{\rr_i}{\rr_j}}\leq L.
	\end{equation}

	As the following lemma shows, condition \eqref{eq:coh_cond} implies that the coherence of $\dictm$ is $\coh$. The proof appears in Section \ref{sec:proof_lems}.
	\begin{lemma}\label{lem:coh}
		Assume that $\slev,\coh$ satisfy conditions \eqref{eq:sparsity_cond} and \eqref{eq:mu_cond}. Then, under condition \eqref{eq:coh_cond}, the coherence of the dictionary $\dictm$ of Eq. \eqref{eq:dict} is exactly $\coh$.
	\end{lemma}
	Before proceeding we remark that such a dictionary $\dictm$ indeed exists. Specifically, Lemma \ref{lem:coh_rand} in Section \ref{sec:proof_lems} shows that if $\rr_{\slev+1},\dots,\rr_N$ are drawn independently and uniformly at random from the unit sphere, and $\coh$ satisfies condition \eqref{eq:mu_cond} with a (possibly) slightly higher value $L=2\sqrt{\frac{\log{\tilde{\atnum}}}{\tilde{\dim}}}$, then condition \eqref{eq:coh_cond} holds with high probability.
	
	Let us now analyze the inability of OMP to successfully recover the support of an underlying $\slev$-sparse vector $\coefv$, given  $\sig = \dictm \coefv+\sigma\err$. Consider a dictionary $\dictm$ of the form \eqref{eq:dict} and the vector
	\begin{equation}\label{eq:lb_vector}
	\coefv=\nu\sum_{j=1}^{\slev}\bm{e}_{j},
	\end{equation}
	which implies that $\indset = \left\lbrace 1,\dots,\slev\right\rbrace $ and $\xmin=\nu$.
	Note that $\dictm\coefv=\nu\sum_{i\in\left[\slev\right]}\atom_{i}=\nu\left(\begin{array}{c}
	\sum_{i\in\left[\slev\right]}\atome_{i}\\
	\bm{0}
	\end{array}\right)$.
	From this point on we view $\dictm $ and $ \coefv$ as fixed and the randomness is only over realizations of the noise vector $\err$.
	
	Our goal is to show that if $\nu$ is sufficiently small such that condition \eqref{eq:SNR_Cond} holds, then with high probability OMP fails to recover the support  $\indset $. 
	For future use, we introduce the following two random variables that depend on the noise $\err$,
	\begin{equation}\label{eq:Aout}
	\Aout=\max_{i\notin\indset}\left|\left\langle \atom_{i},\sig\right\rangle \right|
	\end{equation} 
	and
	\begin{equation}\label{eq:Ain}
	\Ain=\max_{k\in\indset}\left|\left\langle \atom_{k},\sig\right\rangle \right|.
	\end{equation}
	A sufficient condition for the failure of OMP, as described in Algorithm \ref{alg:OMP}, is that it would choose a non-support atom in the first step of the algorithm with probability $\geq P_0$, or equivalently if  \[\Pr\left[ \Aout > \Ain \right]\geq P_0.\]
	
	As we shall see below, due to dependencies between various inner products $\left\langle \atom_{i},\sig\right\rangle$, the probability that $\Aout>\Ain$ is difficult to analyze. 
	Instead, we will introduce two other random variables $\Bout$ and $\Bin$ which satisfy $\Aout\geq\Bout$, $\Bin\geq\Ain$ and for which $\Pr\left[ \Bout > \Bin \right]$ is simpler to analyze.
	
	First, we decompose the noise into its support elements $\err_\slev\in \R^{\slev}$ and non-support elements $\err_{\tilde{\dim}}\in \R^{\tilde{\dim}}$, such that $\err=\left(\begin{array}{c}
	\err_\slev\\
	\err_{\tilde{\dim}}
	\end{array}\right).$
	Next, we analyze the random variable $\Aout$ and define the random variable $\Bout$.
	Using the value for $\tmu$ in Eq. \eqref{eq:tmu} and value of the norm in Eq. \eqref{eq:bar_norm}, the inner product of the observed signal $\sig=\dictm\coefv+\sigma\err$ with a non-support atom $i\notin \indset$ is
	\begin{eqnarray}\label{eq:inp_out}
	\left\langle \atom_{i},\sig\right\rangle  & = & \left\langle \atom_{i},\nu\sum_{j=1}^{\slev}\atom_{j}\right\rangle +\left\langle \atom_{i},\sigma\err\right\rangle\nonumber\\ &=&\left\langle \left(\begin{array}{c}
	\sqrt{\tmu}\bar{\atome}\\
	\sqrt{1-\tmu}\rr_{i}
	\end{array}\right),\nu\sum_{j=1}^{\slev}\left(\begin{array}{c}
	\atome_{j}\\
	\bm{0}
	\end{array}\right)\right\rangle +\left\langle \atom_{i},\sigma\err\right\rangle \nonumber\\
	& = & \nu\sum_{j=1}^{\slev}\left\langle \sqrt{\tmu}\bar{\atome},\atome_{j}\right\rangle +\sigma\left\langle \atom_{i},\err\right\rangle =\frac{\nu\sqrt{\tmu}}{\sqrt{\slev\left(1-\left(\slev-1\right)\coh\right)}}\sum_{j=1}^{\slev}\sum_{j'=1}^{\slev}\left\langle \atome_{j'},\atome_{j}\right\rangle +\sigma\left\langle \atom_{i},\err\right\rangle \nonumber\\
	& = & \frac{\nu\coh}{1-\left(\slev-1\right)\coh}\left(\slev\left(1-\left(\slev-1\right)\coh\right)\right)+\sigma\left\langle \atom_{i},\err\right\rangle =\nu\slev\coh+\sigma\left\langle \atom_{i},\err\right\rangle \nonumber\\
	& = & \nu\slev\coh+\sigma\sqrt{\tmu}\left\langle \bar{\atome},\err_{\slev}\right\rangle +\sigma\sqrt{1-\tmu}\left\langle \rr_{i},\err_{\tilde{\dim}}\right\rangle .
	\end{eqnarray}
	We define $\Bout$ by
	\begin{equation}\label{eq:Bout}
	\Bout=\nu\slev\coh-\sigma\sqrt{\tmu}\left|\left\langle \bar{\atome},\err_{\slev}\right\rangle \right|+\sigma\sqrt{1-\tmu}\max_{i\notin\indset}\left\langle \rr_{i},\err_{\tilde{\dim}}\right\rangle .
	\end{equation}
	Using Eq. \eqref{eq:Aout} and \eqref{eq:inp_out}, 
	\begin{eqnarray*}
		\Aout & = & \max_{i\notin\indset}\left|\left\langle \atom_{i},\sig\right\rangle \right|\geq\max_{i\notin\indset}\left\langle \atom_{i},\sig\right\rangle \\
		& = & \nu\slev\coh+\sigma\sqrt{\tmu}\left\langle \bar{\atome},\err_{\slev}\right\rangle +\sigma\sqrt{1-\tmu}\max_{i\notin\indset}\left\langle \rr_{i},\err_{\tilde{\dim}}\right\rangle \\
		& \geq & \nu\slev\coh-\sigma\sqrt{\tmu}\left|\left\langle \bar{\atome},\err_{\slev}\right\rangle \right|+\sigma\sqrt{1-\tmu}\max_{i\notin\indset}\left\langle \rr_{i},\err_{\tilde{\dim}}\right\rangle =\Bout.
	\end{eqnarray*}
	
	We now analyze the random variable $\Ain$ and define the random variable $\Bin$.
	The inner product of the observed signal $\sig$ with a support atom $k\in\indset$ is 
	\begin{eqnarray}\label{eq:inp_in}
	\left\langle \atom_{k},\sig\right\rangle  & = & \left\langle \atom_{k},\nu\sum_{j=1}^{\slev}\atom_{j}\right\rangle +\left\langle \atom_{k},\sigma\err\right\rangle 
	\nonumber\\
	& = & \nu\sum_{j=1}^{\slev}\left\langle \atome_{k},\atome_{j}\right\rangle +\sigma\left\langle \atom_{k},\err\right\rangle =\nu\left(1-\left(\slev-1\right)\coh\right)+\sigma\left\langle \atome_{k},\err_{\slev}\right\rangle .
	\end{eqnarray}

	To circumvent the dependence between the random variables $\left\langle \atome_{k},\err_\slev\right\rangle$ and $\left\langle \bar{\atome},\err_\slev\right\rangle$, we decompose each $\atome_{k}$ into two components, $\atome_k^{\parallel}$ which is parallel to $\bar{\atome}$ and $\atomeh_{k}$ which is orthogonal to $\bar{\atome}$. Using Eq. \eqref{eq:eps} and \eqref{eq:bar_norm}, for each $k\in\indset$ 
	\begin{equation}
	\left\langle \atome_{k},\bar{\atome}\right\rangle       =       \frac{1}{\left\Vert \sum_{i\in\left[\slev\right]}\atome_{i}\right\Vert }\sum_{i\in\left[\slev\right]}\left\langle \atome_{k},\atome_{i}\right\rangle =\frac{1-\left(\slev-1\right)\coh}{\sqrt{\slev\left(1-(\slev-1)\coh\right)}}=\rho.
	\end{equation}
	Thus $\atomeh_{k}=\atome_{k}- \left\langle \atome_{k},\bar{\atome}\right\rangle\bar{\atome}=\atome_{k}-\rho\bar{\atome}. $
	Combining this relation with Eq. \eqref{eq:inp_in}, we can rewrite $\Ain$ as \[\Ain =\max_{k\in\indset}\left|\nu\left(1-\left(\slev-1\right)\coh\right)+\sigma\left\langle \atomeh_{k}+\rho\bar{\atome},\err_{\slev}\right\rangle \right|. \]
	We define $\Bin$ by 
	\begin{equation}
	\Bin=\nu\left(1-\left(\slev-1\right)\coh\right)+\sigma\rho\left|\left\langle \bar{\atome},\err_{\slev}\right\rangle \right|+\sigma\max_{k\in\indset}\left|\left\langle \atomeh_{k},\err_{\slev}\right\rangle \right|.
	\end{equation}
	By the triangle inequality $\Ain\leq\Bin$.
	
	Now that we defined $\Bout$ and $\Bin$, we proceed to analyze the probability that $\Bout>\Bin$, or equivalently,
	\begin{equation}\label{eq:B_ineq}
	\sqrt{1-\tmu}\max_{i\notin\indset}\left\langle \rr_{i},\err_{\tilde{\dim}}\right\rangle >\frac{\nu}{\sigeff}+\left(\rho+\sqrt{\tmu}\right)\left|\left\langle \bar{\atome},\err_{\slev}\right\rangle \right|+\max_{k\in\indset}\left|\left\langle \atomeh_{k},\err_{\slev}\right\rangle \right|.
	\end{equation}
	For constants $b_1,b_2$ that will be determined later, denote the following three probabilities \[P_1=\Pr\left[\max_{i\notin\indset}\left\langle \rr_{i},\err_{\tilde{\dim}}\right\rangle >b_{1}\right],\]
	\[P_2=\Pr\left[\left|\left\langle \bar{\atome},\err_{\slev}\right\rangle \right|<b_{2}\right],\] and \[P_3=\Pr\left[\max_{k\in\indset}\left|\left\langle \atomeh_{k},\err_{\slev}\right\rangle \right|<\sqrt{1-\tmu}b_{1}-\frac{\nu}{\sigeff}-\left(\rho+\sqrt{\tmu}\right)b_{2}\right].\]
	By the statistical independence of $\err_\slev$ and $\err_{\tilde{\dim}}$ and the linear independence of $\bar{\atome}$ and $\atomeh_{k}$ for all $k\in\indset$, 
	\begin{eqnarray*}
		\Pr\left[\Bout>\Bin\right] & \geq & \Pr\left[\max_{i\notin\indset}\left\langle \rr_{i},\err_{\tilde{\dim}}\right\rangle >b_{1}\right]\times\Pr\left[\left|\left\langle \bar{\atome},\err_{\slev}\right\rangle \right|<b_{2}\right]\\
		&  & \times\Pr\left[\max_{k\in\indset}\left|\left\langle \atomeh_{k},\err_{\slev}\right\rangle \right|<\sqrt{1-\tmu}b_{1}-\frac{\nu}{\sigeff}-\left(\rho+\sqrt{\tmu}\right)b_{2}\right]\\
		& = & P_{1}\cdot P_{2}\cdot P_{3}.
	\end{eqnarray*}
	Hence, instead of proving that \eqref{eq:B_ineq} holds with probability at least $P_0$, it suffices to prove that $P_{1}\cdot P_{2}\cdot P_{3}\geq P_0.$

	We proceed by calculating each of these probabilities, beginning with $P_1$. Since $\rr_{\slev+1},\dots,\rr_N$ are fixed unit vectors, each inner product between $\rr_i$ and the vector of standard normals $\err_{\tilde{\dim}}$ is a standard normal random variable. 
	Let \[b_1=\sqrt{2\left(1-\coh\right)\log\tilde{\atnum}}-c_{0}\sqrt{\log\log\tilde{\atnum}},\]
	where $c_0$ is the constant from Lemma \ref{lem:max_tail}.
	Denote by $M_{\tilde{\atnum}}=\max_{i\notin\indset}\left\langle \rr_{i},\err_{\tilde{\dim}}\right\rangle$. By the first part of Lemma \ref{lem:max_tail}, 
	\begin{eqnarray*}
		\E\left[M_{\tilde{\atnum}}\right] & \geq & \sqrt{2(1-\mu)\log\tilde{\atnum}}-(c_{0}-1)\sqrt{\log\log\tilde{\atnum}}\\
		& = & b_{1}+\sqrt{\log\log\tilde{\atnum}}.
	\end{eqnarray*}
	Therefore, by the triangle inequality,
	\begin{eqnarray*}
		P_{1} & = & \Pr\left[M_{\tilde{\atnum}}>b_{1}\right]\geq\Pr\left[M_{\tilde{\atnum}}>\E\left[M_{\tilde{\atnum}}\right]-\sqrt{\log\log\tilde{\atnum}}\right]\\
		&\geq  & \Pr\left[\left|M_{\tilde{\atnum}}-\E\left[M_{\tilde{\atnum}}\right]\right|<\sqrt{\log\log\tilde{\atnum}}\right].
	\end{eqnarray*}
	By the second part of Lemma \ref{lem:max_tail}, 
	\begin{equation*}
	P_1  \geq  1-6e^{-C\sqrt{\log\log\tilde{\atnum}\min\left\lbrace \coh^{-1},\;\log\tilde{\atnum}\right\rbrace }}.
	\end{equation*}

	Next, we calculate $P_2$. Let $b_2=\sqrt{2\log\log\atnum}$. The term $\left\langle \bar{\atome},\err_\slev\right\rangle $ is simply a standard normal variable. 
	By Lemma \ref{lem:abs_dep_bound} with parameters $n_1=1$, $n_2=\log\atnum$ and $\eta=1$, we obtain that
	\[P_{2}=\Pr\left[\left|\left\langle \bar{\atome},\err_{\slev}\right\rangle \right|<\sqrt{2\log\log\atnum}\right]>1-\frac{1}{\log\atnum\sqrt{\pi\log\log\atnum}}.\]
	
	Lastly, we calculate $P_3$. 
	Recall that by construction $\xmin=\nu$. By Eq. \eqref{eq:SNR_Cond},
	\[\frac{\nu}{\sigeff}=\sqrt{1-\tmu}b_{1}-\left(\rho+\sqrt{\tmu}\right)b_{2}-\sqrt{2\beta\left(1-\rho^{2}\right)\log\atnum}.\]
	Therefore,
	\begin{eqnarray*}
		P_{3} & = & \Pr\left[\max_{k\in\indset}\left|\left\langle \atomeh_{k},\err_{\slev}\right\rangle \right|<\sqrt{1-\tmu}b_{1}-\frac{\nu}{\sigeff}-\left(\rho+\sqrt{\tmu}\right)b_{2}\right]\\
		& = & \Pr\left[\max_{k\in\indset}\left|\left\langle \atomeh_{k},\err_{\slev}\right\rangle \right|<\sqrt{2\beta\left(1-\rho^{2}\right)\log\atnum}\right]\\
		& = & \Pr\left[\max_{k\in\Omega}\left|\frac{\left\langle \atomeh_{k},\err_{\slev}\right\rangle }{\sqrt{1-\rho^{2}}}\right|<\sqrt{2\beta\log\atnum}\right].
	\end{eqnarray*}
	Note that for all $k\in \indset$, $\left\Vert \atomeh_{k}\right\Vert =\sqrt{1-\rho^2}$. Hence, each random variable $\frac{\left\langle \atomeh_{k},\err_\slev\right\rangle}{\sqrt{1-\rho^2}} $ is Gaussian with zero mean and variance $1$. We can apply Lemma \ref{lem:abs_dep_bound} with $n_1= \slev$, $n_2=\atnum$ and $\eta=\beta$, and use the inequality \eqref{eq:sparsity_beta} to get
	\[P_{3}\geq1-\frac{\slev}{\atnum^{\beta}\sqrt{\pi\beta\log \atnum}}\geq 1-\frac{1}{\sqrt{\pi\beta\log \atnum}}.\]
	
	By a union bound, 
	for sufficiently large $\dim$ and $\atnum$ the probability that OMP fails to recover the support $\Omega$ is at least $P_0$, which completes the proof of Theorem \ref{thm:LB}.
\end{proof}

\subsection{Proofs of Lemmas} \label{sec:proof_lems}
To conclude, we prove the auxiliary lemmas.

\begin{proof}[Proof of Lemma \ref{lem:abs_dep_bound}]   
	The proof is similar to that of \citep[Lemma 2]{ben2010coherence}.
	By \citet[Thm. 1]{vsidak1967rectangular}, since $X_{1},\dots,X_{n_1}$ are jointly Gaussian random variables, then
	\begin{equation}\label{eq:sidak}
	\Pr\left[ \max_{i\in [n_1]} \abs{X_i} <\sqrt{2\eta\log n_2} \right]\geq \prod_{i\in [n_1]} \Pr\left[ \abs{X_i} \leq\sqrt{2\eta\log n_2} \right]=\Pr\left[ \abs{X_1} \leq\sqrt{2\eta\log n_2} \right]^{n_1}.
	\end{equation}
	Each $X_i$ is a standard normal random variable. Therefore, \[\Pr\left[ \abs{X_1} \leq x \right]=1-2Q\left( x\right), \]
	where $Q(x)$ is the Gaussian tail probability function. Applying the inequality \[Q\left(x\right)\leq\frac{1}{x\sqrt{2\pi}}e^{-\frac{x^{2}}{2}},\] with $x=\sqrt{2\eta\log n_2}$ gives
	\begin{equation}\label{eq:single_y1}
	\Pr\left[ \abs{X_1} \leq\sqrt{2\eta\log n_2} \right]\geq 1-\frac{e^{-\eta\log n_2}}{\sqrt{\pi\eta\log n_2}}=1-\frac{1}{n_2^{\eta}\sqrt{\pi\eta\log n_2}}.
	\end{equation}
	Inserting Eq. \eqref{eq:single_y1} into \eqref{eq:sidak} and using the inequality $\left( 1-a\right) ^n \geq 1-an$ completes the proof.
\end{proof}

\begin{proof}[Proof of Lemma \ref{lem:m_m_corr_mat}]
	Let $G\in\R^{\slev\times\slev}$ be the following symmetric matrix with entries \[G_{ij} =\begin{cases}
	1 & i=j\\
	-\coh & i\neq j.
	\end{cases}\]
	Hence, $G$ can be rewritten as a rank-one perturbation of the identity matrix \[G=-\coh\bm{1}\bm{1}^{T}+\left(1+\coh\right)\bm{I}.\]
	If $\coh<\frac{1}{m-1}$, then $G$ is positive definite. Therefore, it is the Gram matrix of a set of linearly independent vectors, i.e., there exist $\atome_{1}, \dots, \atome_{\slev}$ such that condition \eqref{eq:Gram} holds, which completes the proof \citep[p. 441]{horn2012matrix}.
	
	For completeness, we describe an explicit construction. Let $\V=\left[\vv_{1}\dots\vv_{\slev}\right]$ be an orthogonal matrix where $\vv_1=\frac{1}{\sqrt{\slev}}\bm{1}$ and
	\begin{equation*}
	\Y=\left(\begin{array}{c}
	\sqrt{\frac{1-\left(\slev-1\right)\coh}{\slev}}\bm{1}^{T}\\
	\sqrt{1+\coh}\vv_{2}^{T}\\
	\vdots\\
	\sqrt{1+\coh}\vv_{\slev}^{T}
	\end{array}\right).
	\end{equation*}
	
	Let us now prove that $\Y$ indeed satisfies condition \eqref{eq:Gram}.
	Since $\V$ is orthogonal, its rows also form an orthonormal basis of $\R^\slev$.
	First, consider the diagonal entries of the Gram matrix $\Y^{T}\Y$. For all $i\in [\slev]$, $\frac{1-\left(\slev-1\right)\coh}{\slev}=-\coh+\left(1+\coh\right)\frac{1}{\slev}=-\coh+\left(1+\coh\right)\V_{1i}^{2}$, and therefore
	\begin{equation*}
	(\Y^{T}\Y)_{ii}  =  \frac{1-\left(\slev-1\right)\coh}{\slev}+\left(1+\coh\right)\sum_{k=2}^{\slev}\V_{ki}^{2}
	=  -\coh+\left(1+\coh\right)\sum_{k=1}^{\slev}\V_{ki}^{2}=1.
	\end{equation*}
	Similarly, for all $i\neq j \in [\slev]$, $\frac{1-\left(\slev-1\right)\coh}{\slev}=-\coh+\left(1+\coh\right)\frac{1}{\slev}=-\coh+\left(1+\coh\right)\V_{1i}\V_{1j}$, and therefore
	\begin{equation*}
	(\Y^{T}\Y)_{ij}  =  \frac{1-\left(\slev-1\right)\coh}{\slev}+\left(1+\coh\right)\sum_{k=2}^{\slev}\V_{ki}\V_{kj}
	=  -\coh+\left(1+\coh\right)\sum_{k=1}^{\slev}\V_{ki}\V_{kj}
	=  -\coh.
	\end{equation*}         
\end{proof}

\begin{proof}[Proof of Lemma \ref{lem:coh}]
	To prove that the coherence of $\dictm$ is $\coh$ we need to analyze three types of dot products $\left\langle \atom_{i},\atom_{j}\right\rangle$. The first type is $1\leq i<j\leq \slev$, the second type is $1\leq i\leq \slev<j\leq \atnum$, and the third is $\slev+1\leq i<j\leq \atnum$.
	
	Beginning with the first type, by construction, for any $1\leq i<j\leq \slev$, \[\abs{\innerp{\atom_i}{\atom_j}}=\abs{\innerp{\atome_i}{\atome_j}}=\coh.\]
	For the second type, by Eq. \eqref{eq:tmu}, for any $1\leq i\leq \slev<j\leq \atnum$,
	\[\abs{\left\langle \atom_{i},\atom_{j}\right\rangle}  = \abs{\left\langle \atome_{i},\sqrt{\tmu}\bar{\atome}\right\rangle} =\frac{\coh}{\rho}\frac{1}{\left\Vert \sum_{i'\in\left[\slev\right]}\atome_{i'}\right\Vert }\abs{\sum_{i'\in\left[\slev\right]}\left\langle \atome_{i},\atome_{i'}\right\rangle}. \]
	Inserting Eq. \eqref{eq:eps} and \eqref{eq:bar_norm},
	\[\abs{\left\langle \atom_{i},\atom_{j}\right\rangle}=\coh\sqrt{\frac{\slev}{1-(\slev-1)\coh}}\frac{1-(\slev-1)\coh}{\sqrt{\slev\left(1-(\slev-1)\coh\right)}}=\coh.\]
	Finally, we address the third type. By the triangle inequality and condition \eqref{eq:coh_cond},
	\begin{equation*}
	\abs{\left\langle \atom_{i},\atom_{j}\right\rangle }=\abs{\tmu+\left(1-\tmu\right)\left\langle \rr_{i},\rr_{j}\right\rangle }\leq\tmu+\left(1-\tmu\right)\left|\left\langle \rr_{i},\rr_{j}\right\rangle \right|\leq\tmu+\left(1-\tmu\right)L.
	\end{equation*}
	It remains to show that for values of $\coh$ in the range of Eq. \eqref{eq:mu_cond},
	\begin{equation}\label{eq:innerprod_cond}
	\tmu+\left(1-\tmu\right)L\leq\mu.
	\end{equation}
	Using the definition \eqref{eq:tmu} of $\tmu$, condition \eqref{eq:innerprod_cond} is \[\frac{\slev\coh^{2}}{1-(\slev-1)\coh}+\left(1-\frac{\slev\coh^{2}}{1-(\slev-1)\coh}\right)L\leq\coh.\] In turn, this inequality can be rewritten as the following quadratic equation \[\coh^{2}\left(2\slev-1-Lm\right)-\coh\left(L\left(\slev-1\right)+1\right)+L\leq0.\]
	Notice that since $L<1$, the term $2\slev-1-Lm>\slev-1\geq 0$. The above inequality is thus satisfied by values of $\coh$ in Eq. \eqref{eq:mu_cond}.
	This range is not empty if $\frac{4L\left(2\slev-1-Lm\right)}{\left(L\left(\slev-1\right)+1\right)^{2}} \leq 1$. It is easy to verify that this condition holds for $\slev$ values in \eqref{eq:sparsity_cond}. Note that the above condition also holds for $\slev\geq\frac{3-L+\sqrt{8-8L}}{L},$ however this range is often not possible due to the MIP condition \eqref{eq:mip_cond}.
\end{proof}

\begin{lemma}\label{lem:coh_rand}
	Let $\rr_{\slev+1},\dots,\rr_N$ be $\tilde{\atnum}$ vectors drawn independently and uniformly at random from the $\tilde{\dim}$-dimensional unit sphere. Suppose $\tilde{\atnum}=\tilde{\atnum}_{\tilde{\dim}}\rightarrow \infty$ satisfies $\frac{\log  \tilde{\atnum}}{\tilde{\dim}}\rightarrow 0$ as $\tilde{\dim}\rightarrow\infty$. Then as $\tilde{\dim}\rightarrow\infty$, condition \eqref{eq:coh_cond} with $L=2\sqrt{\frac{\log{\tilde{\atnum}}}{\tilde{\dim}}}$ holds with probability $e^{-1/\sqrt{8\pi\log\tilde{\atnum}}}$.
\end{lemma}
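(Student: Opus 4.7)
The plan is to reduce the claim to an extreme value computation for the $\binom{\tilde{\atnum}}{2}$ pairwise inner products $\innerp{\rr_i}{\rr_j}$, and then to invoke a Chen--Stein Poisson approximation (or equivalently Jiang's 2004 theorem on maxima of inner products of uniform unit vectors) to obtain the precise asymptotic probability. The key structural fact is rotational invariance: if $\rr_i,\rr_j$ are independent uniform points on $S^{\tilde{\dim}-1}$, then $\innerp{\rr_i}{\rr_j}$ has the same law as the first coordinate of a single uniform point on $S^{\tilde{\dim}-1}$, whose density is proportional to $(1-x^{2})^{(\tilde{\dim}-3)/2}$. In particular, $\sqrt{\tilde{\dim}}\,\innerp{\rr_i}{\rr_j}$ converges to the standard normal, and the same holds after conditioning on $\rr_i$.

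\textbf{Step 1 -- single-pair tail and Poisson rate.} First, I would compute $p:=\Pr[\,|\innerp{\rr_i}{\rr_j}|>L\,]$ for the prescribed $L=2\sqrt{\log\tilde{\atnum}/\tilde{\dim}}$. Under the change of variables $u=\sqrt{\tilde{\dim}}\,x$ the threshold becomes $u=2\sqrt{\log\tilde{\atnum}}$, so that $e^{-u^{2}/2}=\tilde{\atnum}^{-2}$. A careful expansion of the spherical density $(1-x^{2})^{(\tilde{\dim}-3)/2}=\exp(-\tilde{\dim}x^{2}/2)\bigl(1+O(\tilde{\dim}x^{4})\bigr)$ in the regime $x=O(\sqrt{\log\tilde{\atnum}/\tilde{\dim}})$ (this is where the assumption $\log\tilde{\atnum}/\tilde{\dim}\to 0$ is used) combined with the Mills ratio $\Pr[\,|Z|>u\,]\sim \tfrac{2}{u\sqrt{2\pi}}e^{-u^{2}/2}$ gives
\[ p \;=\; (1+o(1))\,\frac{1}{\tilde{\atnum}^{\,2}\sqrt{2\pi\log\tilde{\atnum}}}. \]
Multiplying by $\binom{\tilde{\atnum}}{2}$, the expected number of violating pairs converges to $\lambda_{\tilde{\atnum}}:=1/\sqrt{8\pi\log\tilde{\atnum}}$, which is precisely the exponent appearing in the stated success probability.

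\textbf{Step 2 -- Poisson approximation and main obstacle.} Let $Y_{ij}=\mathbf{1}\{\,|\innerp{\rr_i}{\rr_j}|>L\,\}$ for $\slev+1\leq i<j\leq \atnum$ and $W=\sum_{i<j}Y_{ij}$, so that condition \eqref{eq:coh_cond} is exactly the event $\{W=0\}$. I would apply the Chen--Stein bound with neighbourhoods $B_{ij}=\{(k,\ell):\{k,\ell\}\cap\{i,j\}\neq\emptyset\}$; each vertex of the dependency graph has degree $O(\tilde{\atnum})$, so the standard $b_{1}$ term is $O(\tilde{\atnum}^{2}\cdot\tilde{\atnum}\cdot p^{2})=O(\tilde{\atnum}^{-1}/\log\tilde{\atnum})\to 0$. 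For the $b_{2}$ term one must bound $\Pr[Y_{ij}=Y_{ik}=1]$ on overlapping pairs; conditioning on $\rr_i$, the inner products $\innerp{\rr_i}{\rr_j}$ and $\innerp{\rr_i}{\rr_k}$ are conditionally i.i.d.\ with the same marginal as before (by rotating the conditioning vector to a fixed direction), so the joint probability factors as $p^{2}$ and the $b_{2}$ contribution is $O(\tilde{\atnum}^{3}p^{2})\to 0$ as well. The delicate point -- and the main obstacle -- is that the single-pair asymptotic of Step 1 must be uniform enough to survive the $p^{2}$ factorisation, which in turn requires controlling the relative error between the spherical and Gaussian densities down to the $\tilde{\atnum}^{-2}$ scale; once this is secured, Chen--Stein yields $d_{TV}(W,\mathrm{Poisson}(\lambda_{\tilde{\atnum}}))\to 0$, so that $\Pr[W=0]=(1+o(1))\,e^{-\lambda_{\tilde{\atnum}}}=(1+o(1))\,e^{-1/\sqrt{8\pi\log\tilde{\atnum}}}$, completing the proof.
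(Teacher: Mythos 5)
Your proposal is correct in outline but takes a genuinely different route from the paper. The paper does not reprove anything: it cites the Cai--Jiang limiting-distribution theorem (Lemma \ref{lem:rand_coh_bound}) as a black box, observes that for $L=2\sqrt{\log\tilde{\atnum}/\tilde{\dim}}$ the event $\{L_{\tilde{\dim}}\leq L\}$ coincides (after the approximation $\log(1-L^{2})\approx -L^{2}$) with the event that the normalized statistic exceeds $y=-\log\log\tilde{\atnum}$, and reads off $e^{-Ke^{y/2}}=e^{-1/\sqrt{8\pi\log\tilde{\atnum}}}$. You instead rebuild the relevant special case of that theorem from scratch: rotational invariance to reduce to the one-dimensional marginal with density proportional to $(1-x^{2})^{(\tilde{\dim}-3)/2}$, a Mills-ratio computation giving the Poisson rate $\binom{\tilde{\atnum}}{2}p\to 1/\sqrt{8\pi\log\tilde{\atnum}}$, and a Chen--Stein bound in which the exact factorization $\Pr[Y_{ij}=Y_{ik}=1]=p^{2}$ for pairs sharing one index (correct, and the cleanest part of your argument) kills the $b_{2}$ term. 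This is essentially how the Cai--Jiang theorem is itself proved, so your route is self-contained where the paper's is a two-line citation; the price is that you must control all error terms yourself.

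One such error term deserves a concrete warning. In Step 1 you expand $(1-x^{2})^{(\tilde{\dim}-3)/2}=\exp(-\tilde{\dim}x^{2}/2)\bigl(1+O(\tilde{\dim}x^{4})\bigr)$ and assert that $\log\tilde{\atnum}/\tilde{\dim}\to 0$ suffices for the relative error to vanish. At the threshold $x=L$ the correction in the exponent is of order $\tilde{\dim}L^{4}\asymp(\log\tilde{\atnum})^{2}/\tilde{\dim}$, which is \emph{not} $o(1)$ under the stated hypothesis alone (take $\log\tilde{\atnum}=\tilde{\dim}^{2/3}$); if it does not vanish, the single-pair tail $p$ acquires an extra factor $e^{-4(\log\tilde{\atnum})^{2}/\tilde{\dim}}$ and the Poisson rate changes. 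This is precisely why Cai and Jiang phrase their result in terms of $\tilde{\dim}\log(1-L_{\tilde{\dim}}^{2})$ rather than $\tilde{\dim}L_{\tilde{\dim}}^{2}$: the logarithmic transform absorbs the quartic correction. In fairness, the paper's own proof commits the same approximation when it replaces $\log(1-L_{\tilde{\dim}}^{2})$ by $-L_{\tilde{\dim}}^{2}$ (and also evaluates a weak-convergence limit at a moving point $y\to-\infty$), and in the regime the paper actually cares about ($\atnum$ polynomial in $\dim$, hence $(\log\tilde{\atnum})^{2}/\tilde{\dim}\to 0$) both arguments go through. But for your version to hold under the hypothesis as literally stated, you should either strengthen it to $(\log\tilde{\atnum})^{2}/\tilde{\dim}\to 0$ or integrate the tail against the exact spherical density rather than its Gaussian surrogate.
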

To prove Lemma \ref{lem:coh_rand}, we need the following auxiliary lemma which bounds the largest magnitude of an inner product between random unit vectors.
\begin{lemma} [\citep{cai2012phase}]\label{lem:rand_coh_bound}
	Let $\bm a_1,\dots \bm a_N$ be i.i.d. vectors drawn uniformly at random from the $\dim$-dimensional unit sphere and let \[L_{\dim}=\max_{1\leq i<j\leq \atnum}\left|\left\langle \bm{a}_{i},\bm{a}_{j}\right\rangle \right|. \] Suppose $\atnum=N_p\rightarrow \infty$ satisfies $\frac{\log  \atnum}{\dim}\rightarrow 0$ as $\dim\rightarrow\infty$. Then as $\dim\rightarrow\infty$, the random variable \[ \dim\log(1-L_p^2)+4\log \atnum-\log\log \atnum\] converges weakly to an extreme value distribution with the distribution function $F(y)=1-e^{-Ke^{y/2}}$ for $y\in\R$ and $K=\frac{1}{\sqrt{8\pi }}$.
\end{lemma}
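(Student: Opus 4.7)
The plan is to prove the extreme-value limit by a Poisson approximation for the number of pairwise inner products exceeding a carefully chosen threshold, reducing the proof to an asymptotic tail estimate for the Beta distribution and a factorial-moment calculation that exploits rotational invariance of i.i.d.\ uniform points on the sphere. For fixed $y\in\R$, I would define the threshold $t_p = t_p(y)\in(0,1)$ by
\[
\dim\log(1-t_p^2) \;=\; y - 4\log \atnum + \log\log \atnum,
\]
so that the event $\{\dim\log(1-L_p^2) + 4\log \atnum - \log\log \atnum \le y\}$ coincides with $\{L_p \ge t_p\}$. Under the hypothesis $\log\atnum/\dim \to 0$ one has $t_p \sim 2\sqrt{\log\atnum/\dim} \to 0$ while $t_p\sqrt{\dim} \to \infty$. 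Setting $T_{ij} = \langle \bm a_i, \bm a_j\rangle$ and $W_\dim = \sum_{1\le i<j\le \atnum} \mathbf{1}\{T_{ij}^2 > t_p^2\}$, it suffices to prove $W_\dim \Rightarrow \mathrm{Poisson}(Ke^{y/2})$, since then $P(L_p < t_p) = P(W_\dim = 0) \to e^{-Ke^{y/2}}$, which translates by continuity into the claimed weak convergence to $F$.

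Next I would pin down the marginal tail via the Beta distribution. By rotational invariance of the uniform law on $S^{\dim-1}$, $T_{12}^2 \sim \mathrm{Beta}(1/2,(\dim-1)/2)$, with density proportional to $s^{-1/2}(1-s)^{(\dim-3)/2}$ on $[0,1]$. A Laplace-type estimate of $\int_{t_p^2}^1 s^{-1/2}(1-s)^{(\dim-3)/2}\,ds$, in which the integrand is sharply concentrated on a window of length $O(1/\dim)$ above $t_p^2$, gives
\[
P(T_{12}^2 > t_p^2) \;\sim\; \frac{2\,(1-t_p^2)^{(\dim-3)/2}}{(\dim-1)\,t_p\,B(1/2,(\dim-1)/2)}.
\]
Using Stirling's asymptotic $B(1/2,(\dim-1)/2) \sim \sqrt{2\pi/(\dim-1)}$, the identity $(1-t_p^2)^{(\dim-3)/2} \sim \atnum^{-2}\sqrt{\log \atnum}\,e^{y/2}$ implied by the definition of $t_p$, and $t_p \sim 2\sqrt{\log\atnum/\dim}$, a direct computation yields $\lambda_\dim := \binom{\atnum}{2} P(T_{12}^2 > t_p^2) \to K\,e^{y/2}$ with $K=1/\sqrt{8\pi}$.

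Then I would establish the Poisson limit by factorial moments, i.e., by showing $\E[W_\dim^{(k)}] \to (Ke^{y/2})^k$ for each fixed $k$. The pleasant feature here is that by rotational invariance, any two distinct pairs $\alpha, \beta\subset[\atnum]$ with $|\alpha\cap\beta|\le 1$ yield \emph{independent} random variables $T_\alpha, T_\beta$: conditioning on the shared vector (if any) leaves the remaining two vectors i.i.d.\ uniform on $S^{\dim-1}$, and by spherical symmetry their inner product with any fixed unit vector has a law that does not depend on it. Hence $k$-tuples of pairs whose pairwise intersections all have size at most $1$ contribute $(1+o(1))\lambda_\dim^k$. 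Configurations containing a triangle $\{i,j\},\{i,\ell\},\{j,\ell\}$ number only $O(\atnum^{2k-3})$, and pairwise independence bounds the joint exceedance probability for each such triangle by $P(T^2>t_p^2)^2$, so their total contribution is $O(\lambda_\dim^{k-1}/\atnum)\to 0$; larger overlap patterns are even smaller. This yields $W_\dim \Rightarrow \mathrm{Poisson}(Ke^{y/2})$ and completes the proof.

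The main obstacle is the precision required in Step~2: the naive approximation $T_{12}\sqrt{\dim}\approx N(0,1)$ is only valid in the regime $(\log\atnum)^2/\dim \to 0$, whereas the hypothesis here is merely $\log\atnum/\dim \to 0$. To cover the full regime one must work directly with the Beta density via Laplace's method and carry enough subleading terms to recover the exact constant $K = 1/\sqrt{8\pi}$, since any miscounted factor of $2$, $\pi$, or $\sqrt{\log\atnum}$ would shift the normalizing constant. By contrast, the pairwise-independence observation in Step~3 is what makes the Poisson limit routine and removes any need for a delicate Chen--Stein estimate of the dependence neighborhood.
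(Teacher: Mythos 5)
First, a point of comparison: the paper does not prove this lemma at all --- it is quoted from Cai and Jiang (2012) and used as a black box --- so there is no internal proof to measure you against; I am judging your argument on its own terms and against the cited source, which establishes the limit via the Chen--Stein Poisson approximation rather than factorial moments.

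Your Steps 1 and 2 are correct: the identification $\{p\log(1-L_p^2)+4\log N-\log\log N\le y\}=\{L_p\ge t_p\}$, the Beta$(1/2,(p-1)/2)$ law of $T_{12}^2$, and the Laplace-type tail estimate do combine to give $\binom{N}{2}P(T_{12}^2>t_p^2)\to e^{y/2}/\sqrt{8\pi}$, and working with the exact Beta density rather than a normal approximation is indeed what is needed to get the constant right in the full regime $\log N/p\to 0$. The genuine gap is in Step 3. Pairwise independence of $T_\alpha,T_\beta$ for pairs sharing one index is true, but it does not yield the factorization of $k$-wise joint exceedance probabilities that the factorial-moment method requires, and the implicit claim that only triangles obstruct joint independence is false: for the $4$-cycle $\{1,2\},\{2,3\},\{3,4\},\{4,1\}$ one computes $\E[T_{12}T_{23}T_{34}T_{41}]=p^{-3}\neq 0$, so these four variables are not jointly independent even though every pairwise intersection has size at most one and there is no triangle. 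Likewise, your bound for triangle-containing configurations tacitly multiplies in a factor of $P(T_{12}^2>t_p^2)$ for each of the remaining $k-3$ edges, which again presupposes a joint factorization you have not established. The repair is standard but must be made explicit: either (i) prove by peeling leaves that $\{T_e\}_{e\in F}$ are jointly independent whenever the edge set $F$ is a forest, and bound each non-generic configuration by $P(T_{12}^2>t_p^2)^{|F|}$ for a spanning forest $F$ of its union graph, which recovers exactly the $O(N^{-1})$ error your accounting assumes; or (ii) switch to the Chen--Stein method, which needs only the pairwise bound for overlapping pairs together with the genuine independence of $T_\alpha$ from $\sigma\left(T_\beta:\beta\cap\alpha=\emptyset\right)$, both of which do hold here --- this is the route taken in the cited reference.
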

\begin{proof}[Proof of Lemma \ref{lem:coh_rand}]        
	Note that in the regime stated in the lemma, $L_{\tilde{\dim}}\rightarrow 0 $ as $\tilde{\dim}\rightarrow\infty$. Hence $\log(1-L_{\tilde{\dim}}^{2})\approx-L_{\tilde{\dim}}^{2}$. By Lemma \ref{lem:rand_coh_bound} for $y=-\log\log (\tilde{\atnum})$ and $L=2\sqrt{\frac{\log{\tilde{\atnum}}}{\tilde{\dim}}}$, in the limit
	\begin{eqnarray*}
		\Pr\left[\tilde{\dim}\log(1-L_{\tilde{\dim}}^{2})+4\log\tilde{\atnum}-\log\log\tilde{\atnum}\geq-\log\log\tilde{\atnum}\right] & = & \Pr\left[L_{\tilde{\dim}}\leq L\right]\\
		& = & e^{-Ke^{-\log\log\tilde{\atnum}/2}}\\
		& = & e^{-1/\sqrt{8\pi\log\tilde{\atnum}}}.
	\end{eqnarray*}
	Therefore as $\tilde{\dim}\rightarrow\infty$, $\max_{\slev+1\leq i<j\leq\atnum}\left|\left\langle \rr_{i},\rr_{j}\right\rangle \right|=L_{\tilde{\dim}}\leq L$ and condition \eqref{eq:coh_cond} is satisfied with probability $e^{-1/\sqrt{8\pi\log\left(\tilde{\atnum}\right)}}$.
\end{proof}

\section*{Funding}
R.K. was partially supported by ONR Award N00014-18-1-2364, the Israel Science Foundation grant \#1086/18, and a Minerva Foundation grant.

\bibliographystyle{plainnat}

\end{document}